\numberwithin{equation}{section}
\def \x {{\bf x}}
\def \v {{\bf v}}
\newtheorem{tr}{Theorem}
\newtheorem{remark}{Remark}
\title{Hierarchical clustering and dimensional reduction for optimal control of large-scale agent-based models}
\author{
 Angela Monti \\
  Istituto per le Applicazioni del Calcolo \lq \lq M. Picone\rq \rq \\
  National Research Council (CNR)\\
  via G. Amendola 122/D, Bari, Italy\\
  \texttt{angela.monti@cnr.it} 
  \And
  Fasma Diele \\
  Istituto per le Applicazioni del Calcolo \lq \lq M. Picone\rq \rq \\
  National Research Council (CNR)\\
  via G. Amendola 122/D, Bari, Italy\\
  \texttt{fasma.diele@cnr.it}
  \And
  Dante Kalise \\
  Department of Mathematics \\ Imperial College London \\
  South Kensington Campus\\ SW7 2AZ London, United Kingdom \\
  \texttt{d.kalise-balza@imperial.ac.uk}
}
\begin{document}

\maketitle
\begin{abstract}
Agent-based models (ABMs) provide a powerful framework to describe complex systems composed of interacting entities, capable of producing emergent collective behaviours such as consensus formation or clustering. However, the increasing dimensionality of these models - in terms of both the number of agents and the size of their state space - poses significant computational challenges, particularly in the context of optimal control. In this work, we propose a scalable control framework for large-scale ABMs based on a twofold model order reduction strategy: agent clustering and projection-based reduction via Proper Orthogonal Decomposition (POD). These techniques are integrated into a feedback loop that enables the design and application of optimal control laws over a reduced-order representation of the system.

To illustrate the effectiveness of the approach, we consider the opinion dynamics model, a prototypical first-order ABM where agents interact through state-dependent influence functions. We show that our method significantly improves control efficiency, even in scenarios where direct control fails due to model complexity. Beyond its methodological contributions, this work also highlights the relevance of opinion dynamics models in environmental contexts - for example, modeling the diffusion of pro-environmental attitudes or decision-making processes in sustainable policy adoption - where controlling consensus formation plays a crucial role.
\end{abstract}

\section{Introduction}
\label{sec:intro}
Systems of agents that act and interact within a network can be effectively described using agent-based models (ABMs). These models account for the rules governing each agent (viewed as an individual entity) and their interactions. Although the behaviour of individual agents is typically governed by simple, easily explainable laws, their interactions can give rise to complex collective dynamics. The primary objective of ABMs is to reproduce and predict the emergence of such complex phenomena. In the absence of control, agents interact through simple pairwise rules - such as attraction, repulsion, and alignment - which often lead to self-organising behaviours such as flocking or pattern formation. Such systems arise in mathematical biology, ecology, and social dynamics. These include swarming behaviour, crowd dynamics, opinion formation, synchronization, and many others (see, e.g., 
\cite{Ameden2009,bonnans2006computation,Caplat2008,choi2017emergent,Toscani2006} and references therein).

However, the sensitivity of these emergent behaviours to the agents’ initial configuration underscores the need for robust control strategies. Effective control design enables interventions that guide the system toward desired configurations. Within the framework of first-order ABMs, here we focus on analyzing and promoting consensus formation among agents, under externally imposed control inputs.

The complexity of such systems is typically quantified by their dynamic order - that is, the number of state variables in the system’s state-space representation. This intrinsic complexity already poses significant challenges for simulation and analysis. When control strategies are introduced, the overall system becomes even more demanding, as the control design must be embedded within the already high-dimensional dynamics. As a result, model reduction becomes not only useful, but essential, to ensure that the control problem remains computationally tractable.

In recent years, several approaches have emerged to address these issues by leveraging kinetic theory and mean-field models, which enable the design of scalable control strategies over ensemble dynamics. A kinetic framework for optimal feedback control of particle systems was introduced in \cite{Albi2017}, where the continuous distribution of agents facilitates analytical tractability of the control problem. This paradigm has been further extended through the integration of supervised learning techniques into kinetic control settings, allowing for binary feedback representations in high-dimensional agent populations \cite{Albi2022}.

Complementary to these methodologies, moment-based predictive control schemes have been proposed to design feedback laws based on macroscopic observables, thus circumventing the need to track the full state evolution of individual agents \cite{Albi2022_moment}. On the theoretical side, recent advances in controllability analysis provide conditions under which large-scale particle systems can be steered toward desired configurations via local and global control strategies \cite{Carrillo2022}.

Moreover, while kinetic theory and mean-field models offer tractable formulations of the control problem by leveraging ensemble dynamics, these approaches inherently rely on continuum assumptions. In parallel, a complementary line of research has focused on model reduction techniques. These approaches operates directly on the ABM, with the aim of reducing complexity while preserving essential dynamical features. In \cite{Monshizadeh2013,Monshizadeh2014}, the authors proposed projection-based model reduction schemes for linear multi-agent systems, relying on graph partitioning techniques. More recently, the clustering-by-projection approach has been extended to a broader class of nonlinear network systems \cite{Benner2021}.

In this work, we address the problem of reducing both the number of agents and the dimensionality of their states, with the ultimate goal of enabling efficient control of large-scale ABMs. To this end, we first develop and analyze two distinct reduction strategies, each addressing a different aspect of model complexity, before integrating them into a unified framework.

We begin by reducing the number of agents through a DBSCAN clustering algorithm, which groups agents exhibiting similar dynamical behaviour. The resulting ABM describes the evolution of the center of mass of each cluster, providing a compact representation of the collective dynamics.

In parallel, we consider projection-based model order reduction (MOR) techniques - specifically Proper Orthogonal Decomposition (POD) \cite{Sirovich} - to reduce the state dimension of the agents. By constructing an appropriate snapshot matrix, we define a projection subspace onto which we project the ODE system of each agent or cluster. Using Galerkin projection, we obtain a reduced order model (ROM) that preserves the structure of the original system while significantly lowering computational complexity.

These two complementary approaches are then integrated into a novel reduction-and-control framework, which allows scalable and effective control of complex agent-based systems.

As a guiding example, we focus on a classical model of opinion dynamics, where agents iteratively adjust their opinions through pairwise interactions. Opinion dynamics models have received significant attention in recent years not only for their theoretical insights but also for their relevance in practical domains, such as the diffusion of sustainable behaviours and public acceptance of environmental policies. Understanding and controlling the mechanisms behind consensus formation in such models is of growing importance in environmental applications, where fostering alignment on pro-environmental values or climate action can play a critical role in societal transformation \cite{shin2025climate, giardini2021opinion, sun2013framework}.

The remainder of this paper is organized as follows. In Section \ref{sec:optimal_control}, we introduce the class of agent-based models under consideration, along with the formulation of the associated optimal control problem and the numerical methods based on Pontryagin's first-order necessary conditions. Section \ref{sec:red_agents} is devoted to the reduction of the number of agents via clustering, and the construction of a cluster-level dynamical system. In Section \ref{sec:red_dim}, we apply projection-based model order reduction techniques to reduce the state dimension of individual agents or clusters. In Section \ref{sec:framework}, we present the unified framework that combines the two reduction strategies and demonstrate its application to the controlled system. Finally,  Section \ref{sec:conclusion} concludes the paper with a discussion and future research directions.

\section{First-order ABM model and optimal control problem formulation}
\label{sec:optimal_control}
\subsection{The opinion dynamics}
To illustrate the general framework for implementing model order reduction techniques in the optimal control of large-scale ABMs, we consider the concrete example of first-order ABMs describing opinion dynamics \cite{motsch2014heterophilious}.
In these models, $N$ agents interact
with each other according to the first-order system
\begin{equation}
\label{abm}
\dot{x}_i = \sum_{j=1}^{N} a_{i,j} \, (x_j - x_i), \quad a_{i,j}:=\frac{\phi(\|x_i - x_j\|)}{N}, \quad x_i(0)= x_i^{(0)}.
\end{equation}
Here, the function \( \phi \), with \( 0 < \phi < 1 \), models the interaction between agents, each having a vector of opinions represented by the state \( x_i \in \mathbb{R}^d \). The interaction depends only on the relative distance between agents' opinions, measured using the Euclidean norm. Notice that, for simplicity, in model~(\ref{abm}), the adjacency matrix with entries \( a_{i,j} \) is taken symmetric. However, alternative forms of interaction can be considered, such as in the well-known Hegselmann-Krause model \cite{dietrich2016transient}, where the adjacency matrix, defined as
\[
a_{i,j} := \frac{\phi(\|x_i - x_j\|)}{\sum_k \phi(\|x_i - x_k\|)},
\]
is not symmetric.

Our main interest lies in the emergence of consensus, where all agents asymptotically converge to a common opinion $\bar{x}(t)$. Formally, a solution of \eqref{abm} achieves consensus if
\begin{equation}
\label{consensus}
\lim_{t \to +\infty} \|x_i(t) - \bar{x}(t)\| = 0, \quad \forall i = 1, \ldots, N,
\end{equation}
where
\begin{equation}
\label{consensus_state}
\bar{x}(t) = \frac{1}{N} \sum_{i=1}^{N} x_i(t)
\end{equation}
is the mean state of the system.
To study the emergence of consensus, it is useful to define the consensus parameter  
%form 
% $$B: \mathbb{R}^{dN} \times \mathbb{R}^{dN} \rightarrow \mathbb{R}$$
% $$ B(x,y) = \frac{1}{2 N^2} \sum_{i,j = 1}^{N}
%  \|x_i - y_j \|^2.$$
% Note that for consensus, it results $B(x,x) = 0$. A solution to \eqref{abm} tends to consensus if and only if $X(t) = B(x(t),x(t)) \to 0$ as $t \to \infty$. Thus we will consider
\begin{equation}
 \label{consensus_parameter}
 X(t) = \frac{1}{N^2} \sum_{i=1}^{N} \|x_i(t) - \bar{x}(t)\|^2,
\end{equation}
so a solution to \eqref{abm} tends to consensus if and only if $X(t) \to 0$ as $t \to \infty$.
%as a consensus parameter.

The long-term emergence of consensus is determined  by the specific properties of the interaction kernel $\phi$ and by the initial configuration $x_i^{(0)}$ of the agents. Results have shown that both the support and monotonicity of $\phi$ play a crucial role in the system's convergence properties. For instance, in models with \emph{global interactions} - where agents influence each other uniformly - consensus can emerge unconditionally for a broad class of initial states, provided the interaction strength remains sufficiently high. In contrast, when $\phi$ is \emph{compactly supported}, the evolution of the system becomes highly sensitive not only to the initial spatial distribution, but also to the \emph{connectivity structure} induced by $\phi$ over time. In these settings, the emergence of clusters is possible, and consensus may fail unless certain uniform connectivity conditions are satisfied. This interplay between initial conditions and the structure of $\phi$ is therefore central in understanding both the asymptotic behaviour and the design of control strategies for ABMs \cite{motsch2014heterophilious}.

Control strategies are employed not only to enforce consensus, but also to address several theoretical and practical challenges. First, control may be required to steer the system toward consensus in scenarios where the natural dynamics lead to cluster formation. Second, even when consensus is guaranteed asymptotically, control can be used to \emph{accelerate} convergence, which is crucial in time-sensitive applications. Third, control becomes essential to counteract numerical artifacts - such as spurious clustering due to discretization - which may otherwise prevent convergence even in systems that theoretically tend to consensus. For all these reasons, control design plays a central role in enabling robust and efficient collective behaviour in large-scale agent-based systems.

These considerations motivate the use of an optimal control framework, where the objective is to steer the system toward consensus while minimizing control effort. This approach provides a principled methodology for balancing coordination performance with energy efficiency, as detailed in the following section.

\noindent
\subsection{The optimal control problem} 
In this Section, we consider the problem of designing a centralized forcing term \( \mathbf{u}(t) = (u_1(t), \ldots, u_N(t)) \), acting on all agents, in order to steer the system toward consensus. The aim is to minimize both the dispersion of the agents' positions from their mean and the control effort.

Formally, for \( T > 0 \), we define the set of admissible controls for the entire population as
\[
\mathcal{U}^N : \mathbb{R}_0^+ \rightarrow [L^\infty([0,T]; \mathbb{R}^d)]^N,
\]
where each control signal \( u_i(\cdot) \in \mathcal{U} \) takes values in 
%a compact set \( U \subset \mathbb{R}^d \). 
$\mathbb{R}^d$. The optimal control problem is then formulated as
\begin{equation}
\min_{\mathbf{u}(\cdot)\in \mathcal{U}^N} \mathcal{J}(\mathbf{u}(\cdot); \mathbf{x}_0) ,%:= 
% \int_0^T \ell(\mathbf{x}(t), \mathbf{u}(t)) \, dt, 
\quad \mathbf{x}_0=( x_1^{(0)}, \dots, x_N^{(0)}),
\label{eq:cost_functional_first_order}
\end{equation}
with the cost functional defined by
\begin{equation}
    \mathcal{J}(\mathbf{u}(\cdot); \mathbf{x}_0) := \int_0^T \ell(\mathbf{x}(t), \mathbf{u}(t)) \, dt,
\end{equation}
where
\begin{equation}
\ell(\mathbf{x}(t), \mathbf{u}(t)) := \frac{1}{N} \sum_{i=1}^N \left( \|x_i(t) - \bar{x}(t)   \|^2 + \gamma \|u_i(t)\|^2 \right),
\label{eq:running_cost_first_order}
\end{equation} 
$\bar{x}(t)$ is defined in \eqref{consensus_state} and $\gamma >0$ is a control penalization parameter. 

The minimum of the functional cost is subject to the controlled dynamics:
\begin{equation}
    \label{abm_control}
    \dot{x}_i = \frac{1}{N} \sum_{j=1}^{N} \phi(\|x_i-x_j\|) \left(x_j - x_i\right) + u_i, \quad x_i(0)= x_i^{(0)}, \quad i=1,\dots, N.
\end{equation}
In this way, the control action is designed to penalize both the deviation of the agents' positions from the target consensus state $\bar{x}$ and the control effort required to steer the system.

\subsection{First order necessary conditions for optimality}
\label{sec:opt_nec_cond}
\noindent The existence of a minimizer \(\mathbf{u}^*\) for problem \eqref{eq:cost_functional_first_order} follows from the smoothness and convexity of the system dynamics and the functional cost. To characterize such an optimal control, we apply the Pontryagin's Minimum Principle \cite{pontryagin2018mathematical}, which provides first-order necessary conditions.
Let $p_i \in \mathbb{R}^d$ denote the adjoint variable associated with the state $x_i \in \mathbb{R}^d$, for $i = 1, \dots, N$ and we define the vectors:
\[
\mathbf{x}(t) = (x_1(t), \dots, x_N(t)) \in \mathbb{R}^{dN}, \quad
\mathbf{p}(t) = (p_1(t), \dots, p_N(t)) \in \mathbb{R}^{dN}.
\]
Then, the corresponding Hamiltonian function is given by
\begin{equation}\label{eq:hamiltonian}
    \mathcal{H}(\mathbf{x}, \mathbf{u}, \mathbf{p}) = \frac{1}{N} \sum_{i=1}^{N} \left( \|x_i - \bar{x}\|^2 + \gamma \|u_i\|^2 \right)
    + \sum_{i=1}^{N} p_i^T \left( \frac{1}{N} \sum_{j=1}^{N} \phi(\|x_i - x_j\|)(x_j - x_i) + u_i \right).
\end{equation}

From the structure of the Hamiltonian, we recover the state dynamics via
\begin{equation}
    \label{state}
\dot{x}_i = \nabla_{p_i} \mathcal{H}(\mathbf{x}, \mathbf{u}, \mathbf{p}),
\end{equation}
   and obtain the adjoint equations as
\begin{equation}
    \label{costate}
\dot{p}_i = -\nabla_{x_i} \mathcal{H}(\mathbf{x}, \mathbf{u}, \mathbf{p}),
\end{equation}
with transversality conditions
$$ p_i(T) = 0,$$
for all $i = 1, \dots, N$.
Under the hypothesis of $\phi$ sufficiently regular, the explicit form of the adjoint gradient reads
\begin{equation}
    \label{com_adjoint}
    \nabla_{x_i} \mathcal{H}(\mathbf{x}, \mathbf{u}, \mathbf{p}) =
    \frac{2}{N} (x_i - \bar{x}) +
    \frac{1}{N} \sum_{j \neq i} \left(
        \frac{\phi'(\|x_i - x_j\|)}{\|x_i - x_j\|} \langle p_j - p_i, x_j - x_i \rangle (x_j - x_i)
        + \phi(\|x_i - x_j\|)(p_j - p_i)
    \right).
\end{equation}

The optimality condition with respect to the control yields
\[
\nabla_{u_i} \mathcal{H}(\mathbf{x}, \mathbf{u}, \mathbf{p}) = 0,
\]
which leads to the closed-form control law
\begin{equation}
    \label{com_optimality}
    u_i =  -\frac{N}{2\gamma} \, p_i,
\end{equation}
for all $i = 1, \dots, N$.

\subsection{Fixed-horizon iterative optimal control strategy}

In many practical scenarios, the final time horizon $T > 0$ required to achieve consensus may be large and is not known a priori. This presents a challenge for solving the associated Hamiltonian boundary value problem \eqref{state}-\eqref{costate} arising from the necessary optimality conditions, particularly when using classical forward-backward sweep methods. These methods are prone to instability or divergence if the initial guess for the control is not sufficiently close to the optimal solution.

To mitigate this issue, regularization techniques have been proposed in the literature (see, e.g., \cite{li2018maximum}), demonstrating global convergence in the continuous case. Additionally, symplectic integrators have been employed for their well-established advantages in preserving the geometric structure of Hamiltonian systems.  The theory of geometric numerical integration and backward error analysis supports the superior performance of symplectic methods, as they approximate the trajectories of modified Hamiltonian systems with high accuracy. Works such as \cite{hager2000runge, bonnans2006computation, ragni2010steady, diele2011exponential}, have shown that symplectic Runge-Kutta methods lead to consistent results across both direct and indirect methods in optimal control.

Nevertheless, even regularized methods based on symplectic algorithms may struggle in the presence of large or uncertain horizons. To address this issue, we propose an alternative approach based on fixed-horizon iterative control. Instead of solving a single optimal control problem over a long or unknown horizon, we iteratively solve local problems on short intervals of fixed length $h$, denoted by $[t_m, t_m + h]$. At each iteration $m$, the control is computed over the entire interval, but only the value at the initial time $t_m$ is applied to evolve the system. This results in a piecewise constant control strategy that is updated at each step.
%, updating the system configuration at the end of each step. 
Although this approach is suboptimal with respect to the full time frame optimization, in practice it produces very satisfactory results.

Let \( \mathbf{x}_0^{(0)}:=\mathbf{x}_0 \) denote the initial configuration of the system. At each iteration \( m = 1, \dots, M \), the following steps are performed:

\begin{enumerate}
    \item Solve the optimal control problem on \([t_m, t_m + h]\):
    \[
    \min_{\mathbf{u}^{(m)} \in \mathcal{U}^N} \int_{t_m}^{t_m+h} \ell\bigl(\mathbf{x}^{(m)}(t), \mathbf{u}^{(m)}(t)\bigr) \, dt,
    \]
    subject to the dynamics
    \[
    \dot{x}_i^{(m)} = \frac{1}{N} \sum_{j=1}^N \phi(\|x_i^{(m)} - x_j^{(m)}\|)\,(x_j^{(m)} - x_i^{(m)}) + u_i^{(m)},
    \quad i = 1, \dots, N,
    \]
    with initial condition \(  \mathbf{x}_0^{(m-1)} \).

    \item 
    %Apply the control \( \mathbf{u}^{(m)}(t) \) on \([0, h]\) and compute the resulting trajectory.
    Apply only the control value $u^{(m)}(t_m)$ to evolve the system for one time step $h_t \ll h$.

    \item Update the initial state:
    \[
    \mathbf{x}_0^{(m+1)} := \mathbf{x}^{(m+1)}(t_{m+1})
    \]
with $t_{m+1} = t_m + h_t$. 
    \item If the consensus condition is satisfied:
     $$X(t_{m+1}) < \varepsilon$$
% % \end{equation}
    with \( \varepsilon > 0 \) a prescribed tolerance (e.g. machine precision), the iteration stops.
\end{enumerate}

This iterative strategy allows for adaptive control design without prior knowledge of the total time required to achieve consensus. It also improves numerical stability by avoiding the need to solve stiff or unstable boundary value problems on long intervals.

\subsection{Low-dimensional example: generalized Hegselmann-Krause
influence functions on infinite support.}

To demonstrate the effectiveness of the proposed optimal control framework, we use the smoothed generalized Hegselmann–Krause influence functions on infinite support \cite{dietrich2016transient}. We choose to consider such functions using a sigmoid according to the formula
$$
\phi_{GHK}(x) = \frac{1 - \text{sig}(\alpha(x - 1))}{1 - \text{sig}(-\alpha)}, \quad \text{sig}(y) = \frac{1}{1 + e^{-y}}$$
where \( \alpha \) is a parameter to control the smoothness of the curve. 
As $\alpha$ increases, the smoothed interaction function increasingly resembles the original Hegselmann-Krause step function with a communication range of $1$, as introduced in \cite{rainer2002opinion}. However, this original formulation cannot be directly incorporated into the proposed optimal control framework due to its lack of sufficient regularity.
Note that lower values of $\alpha$ enlarge the support of the interaction function, thereby promoting faster and more global convergence towards consensus. In contrast, larger values of $\alpha$ make the function increasingly behave like one with compact support, which tends to promote the formation of persistent opinion clusters over long time horizons. 

To illustrate the influence of the parameter $\alpha$ on the opinion dynamics, we consider a system of $N = 10$ agents evolving in $\mathbb{R}^2$ according to the interaction dynamics \eqref{abm}, and we simulate the uncontrolled system using the smoothed interaction function $\phi_{GHK}(x)$ for the four representative values $\alpha = 0.1$, $\alpha = 1.6$, $\alpha = 5$, and $\alpha = 300$. The left panel of Figure \ref{fig:alpha} shows the evolution of the agents' opinions under the smoothed generalized Hegselmann–Krause dynamics for different values of the parameter $\alpha$. 

For $\alpha = 0.1$, the interaction function decays slowly and remains significantly positive even at large distances. As a result, agents influence each other almost globally, leading to a rapid convergence to consensus. For $\alpha = 1.6$, the function decays more sharply, resembling the exponential kernel $\phi(x) = e^{-x}$. In this case, opinions still converge to a single cluster, but over a much longer time horizon, reflecting the reduced but still significant range of interaction. In the case $\alpha = 5$, the influence becomes more localized. Initially, multiple clusters form, but some of them eventually merge. The result is the formation of a few larger opinion groups rather than a single consensus. This intermediate regime highlights the transition between global consensus and persistent fragmentation.

Finally, for $\alpha = 300$, the smoothed interaction function closely approximates the original discontinuous Hegselmann-Krause model. Influence is restricted to very close neighbours, and as a result, multiple distinct clusters emerge and persist indefinitely. The system lacks global consensus, and the initial fragmentation of opinions is preserved over long time scales.

\begin{figure}[hbtp]
    \centering
    \includegraphics[width=0.4\textwidth]{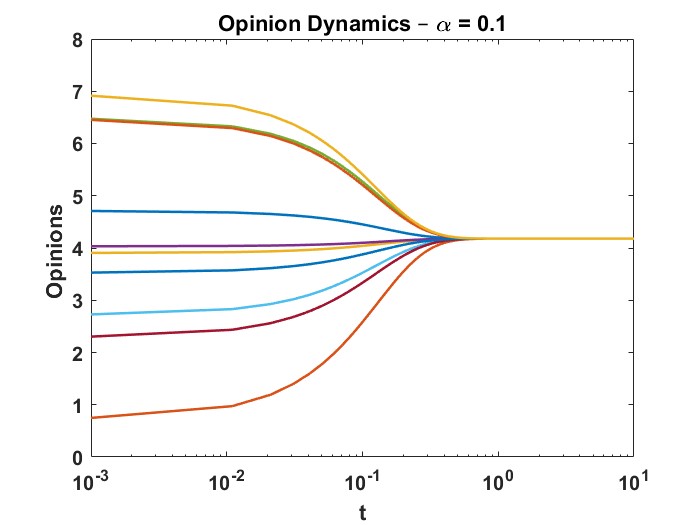}
    \includegraphics[width=0.4\textwidth]{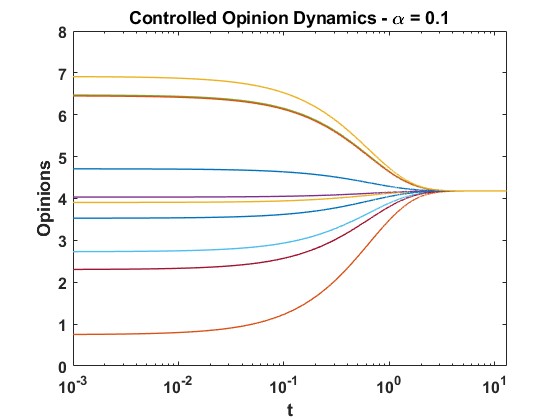}
 \includegraphics[width=0.4\textwidth]{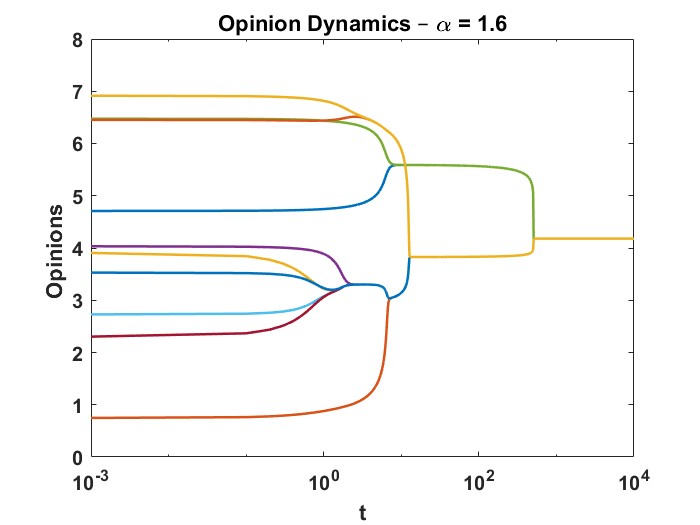}
 \includegraphics[width=0.4\textwidth]{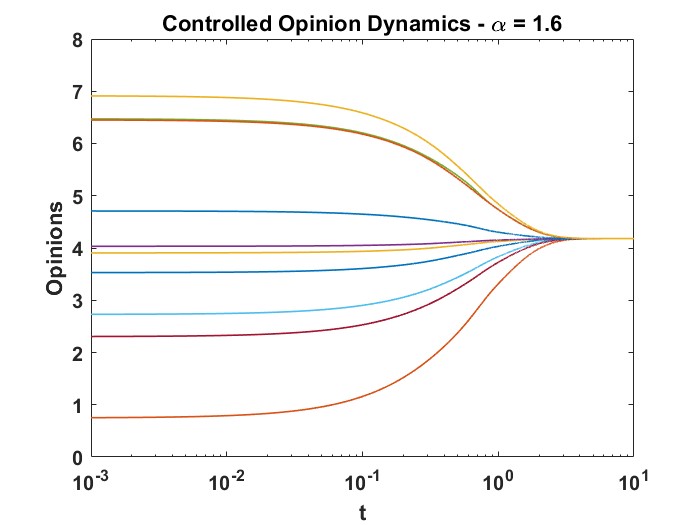}
 \includegraphics[width=0.4\textwidth]{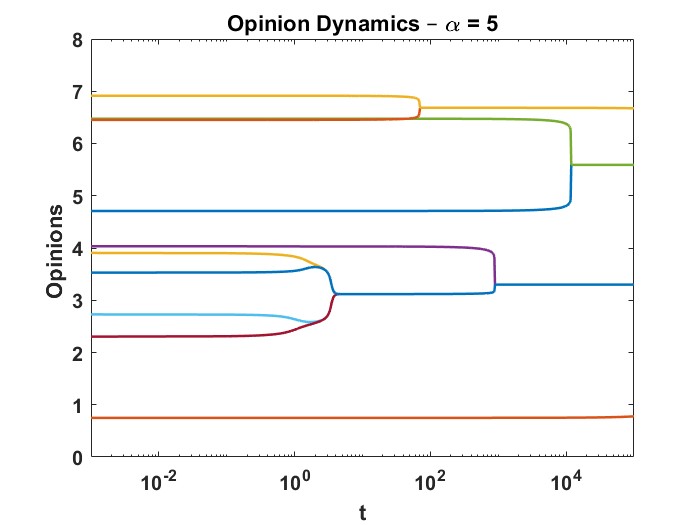}
 \includegraphics[width=0.4\textwidth]{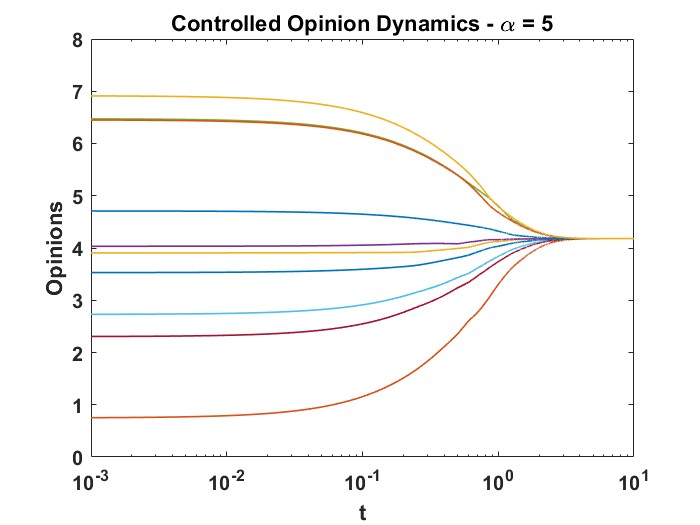}
 \includegraphics[width=0.4\textwidth]{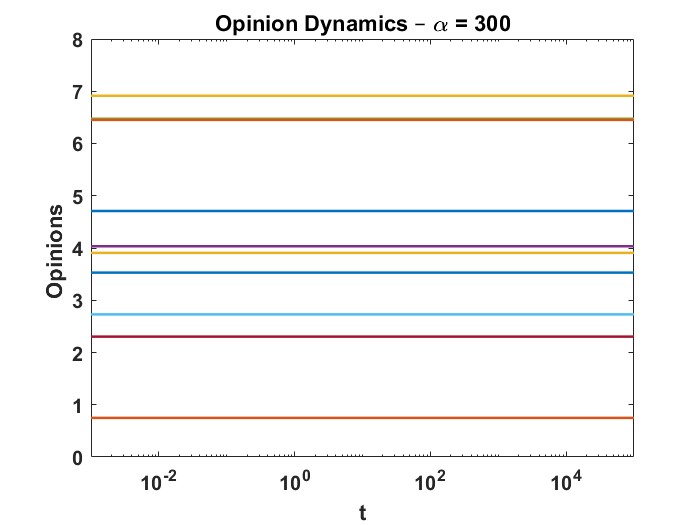}
 \includegraphics[width=0.4\textwidth]{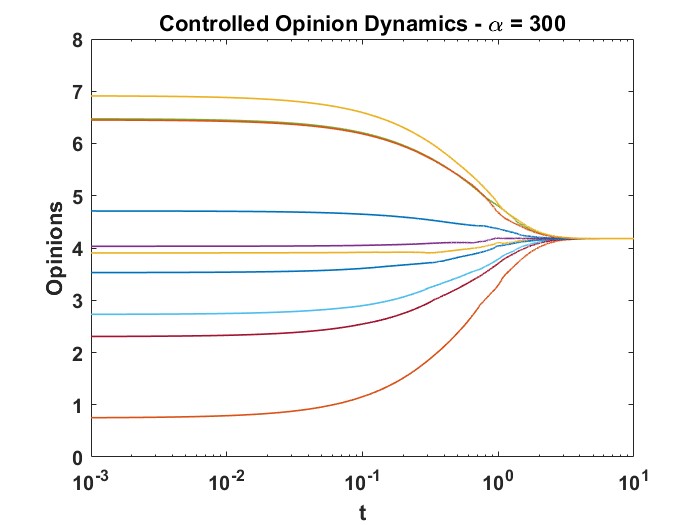}
    \caption{Opinion dynamics for four different values of the smoothness parameter $\alpha$. Left panel: uncontrolled dynamics. As $\alpha$ increases, the interaction function becomes more localized, transitioning from global consensus to the persistence of multiple opinion clusters. Right panel: controlled dynamics. Larger values of $\alpha$ induce more localized interactions, slowing down convergence and requiring more control effort to reach consensus.}
    \label{fig:alpha}
\end{figure}

To assess how the proposed optimal control strategy enforces consensus, we apply it under different interaction regimes defined by the parameter $\alpha$. This allows us to evaluate its performance in settings where the uncontrolled dynamics may naturally lead to either consensus or long-lasting opinion fragmentation. In the right panels of Figure \ref{fig:alpha}, we show the trajectories of the controlled systems. 
When applying the proposed control strategy, the agents are driven toward consensus, with the mean alignment error dropping below a prescribed threshold.

% \begin{figure}[htbp]
%     \centering
%     \includegraphics[width=0.4\textwidth]{Figures/controlled_alpha01.jpg}
%  \includegraphics[width=0.4\textwidth]   {Figures/controlled_alpha16.jpg}
%  \includegraphics[width=0.4\textwidth]{Figures/controlled_alpha5.jpg}
%  \includegraphics[width=0.4\textwidth]{Figures/controlled_alpha300.jpg}
%     \caption{Controlled opinion dynamics for increasing values of the smoothness parameter $\alpha$. Larger values of $\alpha$ induce more localized interactions, slowing down convergence 
% and requiring more control effort to reach consensus.}
% \label{fig:controlled}
% \end{figure}

 We conducted a series of simulations with $\alpha = 1.6$, systematically varying the state dimension $d \in \{50, 100, 150\}$ and the number of agents $N \in \{50, 100, 150\}$. 
 % For each configuration, we measured the computation time required to achieve consensus, with the normalized results summarized in Table \ref{tab:consensus_times_full}. 
 Table \ref{tab:consensus_times_full} presents the normalized CPU time required to reach consensus, calculated relative to the baseline configuration ($d=50$, $N=50$). The data show that the computational effort increases significantly as $N$ or $d$ increases, but the sensitivity to $N$ is much stronger. 

In particular, for fixed $d$, the CPU time exhibits a super-linear growth as $N$ increases. 
On the other hand, for fixed $N$, the growth of CPU time is approximately linear. Hence, while both the number of agents $N$ and the dimensionality $d$ affect performance, the impact of $N$ is substantially greater, suggesting that scalability with respect to the population size is the primary computational bottleneck.

These results highlight the limitations of directly applying optimal control strategies in high-dimensional multi-agent systems. To ensure scalability and computational efficiency, it is essential to incorporate model order reduction techniques. Moreover, the results in Table \ref{tab:consensus_times_full} suggest that efforts to improve the scalability of the consensus algorithm should primarily focus on mitigating the computational impact of increasing the number of agents $N$. In the following section, we address this issue.

% \begin{table}[ht]
% \centering
% \caption{Computation times (in seconds) to reach consensus for \(\alpha = 1.6\), with varying state dimension \(d\) and number of agents \(N\).}
% \label{tab:consensus_times_full}
% \begin{tabular}{ccccc}
% & & \multicolumn{3}{c}{\bf N} \\
% % \(\mathbf{d \backslash N}\) & \(\mathbf{50}\) & \(\mathbf{100}\) & \(\mathbf{150}\) \\
% \cline{3-5}
% & &  50 & 100 & 150 \\
% \midrule
% \multirow{4}{1em}{d} & \(2\)   & 270\,s & 1411\,s & 3120\,s \\
% & \(10\)  & 404\,s & 1717\,s & 3960\,s \\
% & \(50\)  & 667\,s &2635\,s & 5672\,s \\
% & \(100\) & 913\,s & 3653\,s & 7964 \,s      \\
% \bottomrule
% \end{tabular}
% \end{table}
\begin{table}[b]
\centering
\caption{Normalized CPU time to reach consensus for $\alpha = 1.6$, computed as the ratio with respect to the baseline configuration $(d = 50,\ N = 50)$.}
\label{tab:consensus_times_full}
\begin{tabular}{ccccc}
\toprule
& & \multicolumn{3}{c}{\textbf{N}} \\
\cmidrule{3-5}
& & $\mathbf{50}$ & $\mathbf{100}$ & $\mathbf{150}$ \\
\midrule
\multirow{3}{*}{\textbf{d}} 
%& $\mathbf{2}$   & $270$\,s  & $1411$\,s & $3120$\,s \\
%& $\mathbf{10}$ & $404$\,s  & $1717$\,s & $3960$\,s \\
% & $\mathbf{50}$  & $667$\,s  & $2635$\,s & $5672$\,s \\
% & $\mathbf{100}$ & $913$\,s  & $3653$\,s & $7964$\,s \\
& $\mathbf{50}$  & $1\phantom{xx}$  & $4.21$ & $\phantom{x}8.76$ \\
& $\mathbf{100}$  & $1.4\phantom{x}$  & $5.27$ & $11.57$ \\
& $\mathbf{150}$  & $1.79$  & $7.04$ & $15.72$ \\
\bottomrule
\end{tabular}
\end{table}

\section{Reduction of collective dynamics: number of agents}
\label{sec:red_agents}

Since the goal is to control an ABM, having a large number of agents makes the control problem computationally expensive, even if the dimension of each agent is very low. 
Therefore, the aim of this Section is to reduce the number of agents by forming clusters based on the ABM \eqref{abm}. Each cluster will have its own dynamics determined by the center of mass; then, we will project it into a subspace and control the reduced dynamical system.
To form clusters, 
% we solve the full model \eqref{abm} 
%(without control, i.e. $u_i = 0$ for $i = 1, \dots, N$) 
%in the time interval $[t_0,\bar{t}]$
 %to construct the snapshot matrix. 
 % From this, we obtain the reduced system \eqref{pod_abm} (without control) and use the vectors $x^r_i(\bar{t}) = \Psi_r^T x_i(\bar{t})$ and 
we consider the solutions $x_i(\bar{t})$ of the full model \eqref{abm} at a given time $\bar{t}$. We then apply the DBSCAN algorithm, a density-based clustering technique originally proposed in \cite{dbscan}, which allows us to obtain $K$ clusters. In contrast to the $k$-means algorithm, which requires the number of clusters as an input, DBSCAN determines the number of clusters $K$ automatically, based on the density of points in the dataset. This is achieved by identifying regions in which agents are densely packed, using two main parameters: the radius of the neighborhood $\varepsilon$ and the minimum number of points MinPts required to form a cluster.
Having applied the DBSCAN algorithm to the set of agent opinions, we obtain $K$ distinct clusters. These can be denoted by
 \begin{equation*}
 \{x_i\}_{i \in I_1}, \, \{x_i\}_{i \in I_2}, \dots, \{x_i\}_{i \in I_K}
 \end{equation*}
where each index set $I_1, I_2, \dots, I_K \subset I = \{1,2,\dots,N\}$ identifies agents belonging to a specific cluster.
For each cluster, we want to control the dynamics of the center of mass. Therefore, let us consider
\begin{equation}
    \label{center_of_mass}
    \hat{x}_l = \frac{1}{N_l} \sum_{i \in I_l} x_i
\end{equation}
the center of mass of the $l$-th cluster.
First, we want to describe how $\hat{x}_l$ evolves in time for $l = 1, 2, \dots, K$. Taking into account the time evolution of the system \eqref{abm} without control, we compute the time derivative of $\dot{\hat{x}}_l$
\begin{equation*}
\begin{aligned}
    \dot{\hat{x}}_l = \frac{1}{N_l} \sum_{i \in I_l} \dot{x}_i &= \frac{1}{N_l} \sum_{i \in I_l} \left(\frac{1}{N} \sum_{j=1}^{N} \phi(\|x_i-x_j\|) \left(x_j-x_i\right) \right) \\
    &= \frac{1}{N_l} \sum_{i \in I_l} \left(\frac{1}{N} \sum_{m=1}^{K} \sum_{j \in I_m} \phi(\|x_i-x_j\|) \left(x_j-x_i\right) \right) \\
    &= \frac{1}{N_l} \frac{1}{N} \sum_{m=1}^{K} \sum_{i \in I_l} \sum_{j \in I_m} \phi(\|x_i-x_j\|) \left(x_j-x_i\right)
    \end{aligned}
\end{equation*}
for $l = 1, 2, \dots, K$.
Since the DBSCAN algorithm forms clusters based on distance, and we are assuming that the interaction kernel $\phi$ depends on the relative distance between agents, we can suppose
\begin{equation}
    \label{assumption_P_cluster}
    \phi(\|x_i-x_j\|) \approx \phi(\|\hat{x}_l-\hat{x}_m\|), \quad \forall i \in I_l,\, \forall j \in I_m,
\end{equation}
and we obtain
\begin{equation*}
    \begin{aligned}
    \dot{\hat{x}}_l &\approx \frac{1}{N_l} \frac{1}{N} \sum_{m=1}^{K} \phi(\|\hat{x}_l-\hat{x}_m\|) \sum_{i \in I_l} \sum_{j \in I_m}  \left(x_j-x_i\right) \\
    &= \frac{1}{N_l} \frac{1}{N} \sum_{m=1}^{K} \phi(\|\hat{x}_l-\hat{x}_m\|) \sum_{i \in I_l} \left(\sum_{j \in I_m}  x_j-\sum_{j \in I_m} x_i\right) \\
    &= \frac{1}{N_l} \frac{1}{N} \sum_{m=1}^{K} \phi(\|\hat{x}_l-\hat{x}_m\|) \sum_{i \in I_l} \left( N_m \hat{x}_m - N_m x_i\right) \\
    &= \frac{1}{N_l} \frac{1}{N} \sum_{m=1}^{K} N_m \phi(\|\hat{x}_l-\hat{x}_m\|) \sum_{i \in I_l} \left( \hat{x}_m - x_i\right) \\
    &= \frac{1}{N_l} \frac{1}{N} \sum_{m=1}^{K} N_m \phi(\|\hat{x}_l-\hat{x}_m\|) \left(N_l  \hat{x}_m - N_l \hat{x}_l\right) \\
    &= \frac{1}{N} \sum_{m=1}^{K} N_m \phi(\|\hat{x}_l-\hat{x}_m\|) \left(  \hat{x}_m - \hat{x}_l\right). 
    \end{aligned}
\end{equation*}
Finally, the dynamics of the center of mass of each cluster is given by
\begin{equation}
    \label{dyn_com}
    \dot{\hat{x}}_l \approx \frac{1}{N} \sum_{m=1}^{K} N_m \phi(\|\hat{x}_l-\hat{x}_m\|) \left(  \hat{x}_m - \hat{x}_l\right)
\end{equation}
with $\hat{x}_l \in \mathbb{R}^d$ for $l = 1,2,\dots,K$. We observe that it has the same form as the full \eqref{abm} ABM, except for the \emph{weights} $N_m$, which represents the number of agents within each $m$-th cluster. These weights scale the interaction terms in the ABM, determining the influence of each cluster on the overall system's behavior and accounting for the different sizes of the clusters.

\subsection{Effect of reduction of the number of agents in the opinion-dynamics model}
In this Section, we investigate the impact of reducing the number of agents in the opinion dynamics model through clustering. We consider $\alpha = 1.6$ in the Hegselmann-Krause function; then, we fix the dimension of each agent to $d = 50$ and vary the total number of agents $N \in \{50, 100, 150\}$. The initial conditions are pre-clustered 
with agents grouped in such a way that the mean opinion distribution is deliberately non-symmetric. This setup ensures that the clusters reflect heterogeneous initial biases, avoiding artificial symmetry in the consensus trajectory.
%, resulting in $K = 10$ clusters at the initial time. 
The clustering algorithm (DBSCAN) is applied to group agents based on their spatial proximity, and the dynamics of each cluster is governed by the evolution of its center of mass, as described in Equation \eqref{dyn_com}.

Figure \ref{fig:test_cluster} illustrates the behaviour of the system over time. The first row shows the evolution of the mean opinion value, confirming that consensus is eventually reached across all configurations. The second row displays the number of clusters over time, which decreases as agents converge toward a common opinion. This reduction in the number of clusters is a direct consequence of the consensus formation process, in which initially distinct opinion groups merge.

\begin{figure}[tbp]
    \centering
    \includegraphics[width=0.33\linewidth]{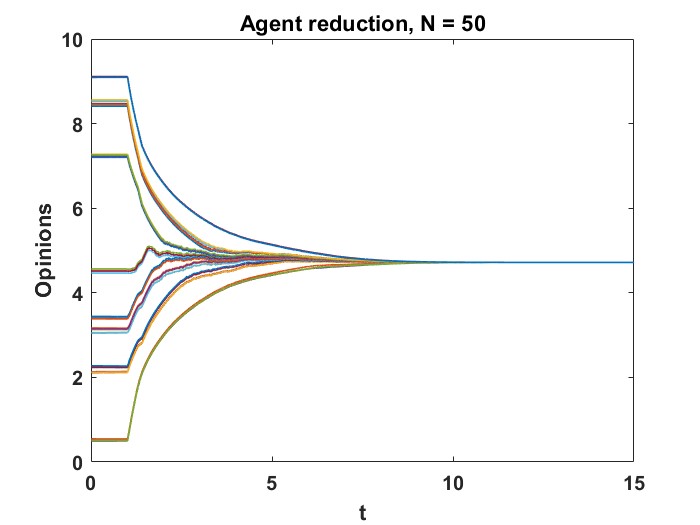}
    \includegraphics[width=0.33\linewidth]{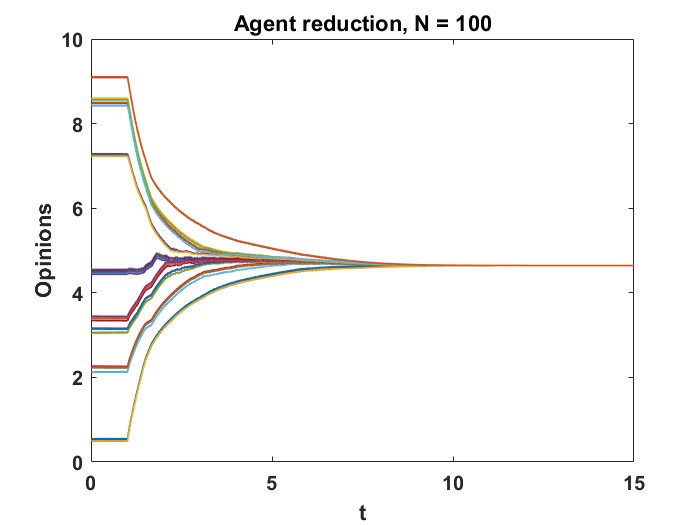}
    \includegraphics[width=0.33\linewidth]{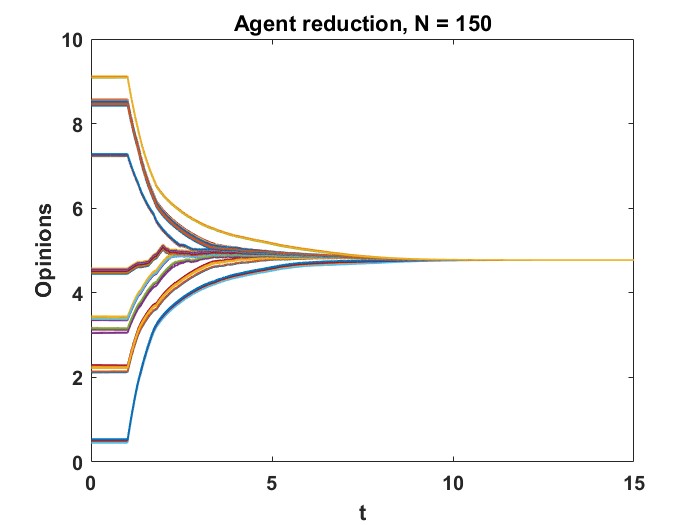}
    \includegraphics[width=0.33\linewidth]{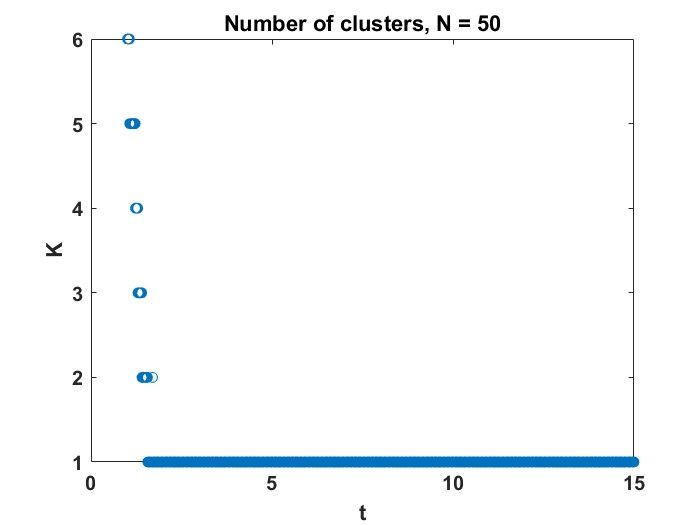}
    \includegraphics[width=0.33\linewidth]{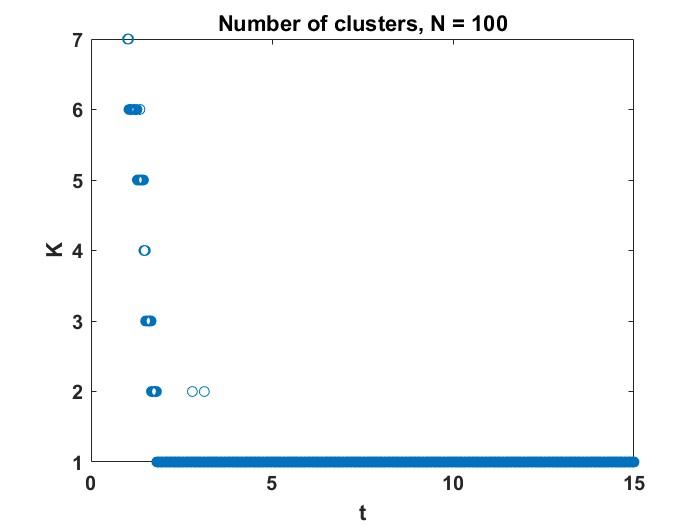}
    \includegraphics[width=0.33\linewidth]{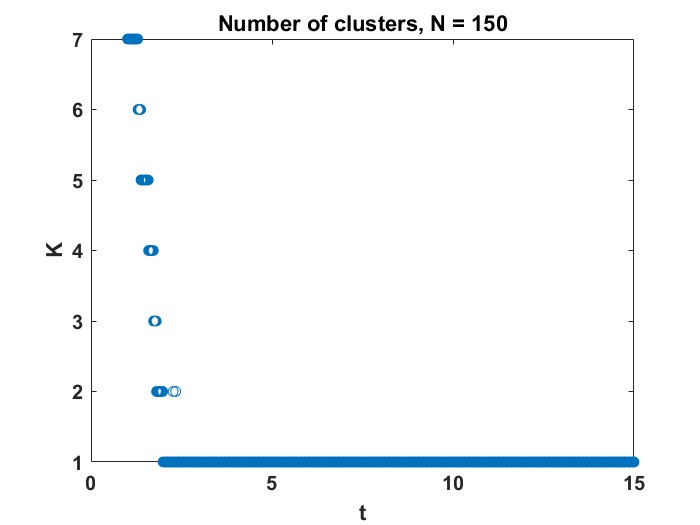}
    \caption{Effect of agent clustering on consensus formation for fixed agent dimension $d = 50$ and varying number of agents $N \in \{50, 100, 150\}$. The top row shows the evolution of the mean opinion value over time, indicating convergence to consensus. The bottom row displays the number of clusters over time, which decreases as agents merge into unified opinion groups. All simulations start from $K \ll N$ initial clusters, reflecting the pre-clustered initial conditions.}
    \label{fig:test_cluster}
\end{figure}
Table \ref{tab:consensus_cpu_cluster} illustrates the computational gains achieved by reducing the number of agents through clustering. Specifically, it reports the speed-up factors, defined as the ratio of the CPU time required by the full agent-based model to that required by the reduced model, for various values of $d$ and $N$.

The results show that the speed-up factor increases considerably with the number of agents $N$. For instance, when $d = 50$, the speed-up factor rises from approximately 61 for $N = 50$ to about $239$ for $N = 150$. This pattern is consistent across all values of $d$, indicating that the computational savings grow significantly as the population size increases.

The effect of increasing $d$ for fixed $N$ is more nuanced. For a given $N$, the speed-up factors decrease moderately as $d$ increases, suggesting that the benefits of agent reduction become slightly less pronounced in higher-dimensional settings, although they remain substantial.

Overall, the table confirms that clustering offers significant computational advantages, particularly for large-scale systems with many agents. The speed-up factors exceeding $100$ in several configurations highlight the potential of this approach to make otherwise intractable simulations feasible.

% To quantify the computational benefits of clustering, we compare the CPU time required to reach consensus in the reduced model with that of the full agent-based model. Table \ref{tab:consensus_cpu_cluster} reports the speed-up factors obtained for different values of $N$. The results demonstrate a significant reduction in terms of the computational cost, especially as the number of agents increases. This highlights the effectiveness of clustering when dealing with large-scale agent-based simulations, where the full model would otherwise be computationally unfeasible.

\begin{table}[htbp]
\centering
\caption{Effect of agent reduction on the CPU time required to reach consensus for $\alpha = 1.6$. The table reports the speed-up factor obtained by reducing the number of agents.}
\label{tab:consensus_cpu_cluster}
\begin{tabular}{ccccc}
\toprule
& & \multicolumn{3}{c}{\textbf{N}} \\
\cmidrule{3-5}
& & $\mathbf{50}$ & $\mathbf{100}$ & $\mathbf{150}$ \\
\midrule
\multirow{3}{*}{\textbf{d}} 
& $\mathbf{50}$ & $61.36$ & $166.62$ & $239.16$ \\
& $\mathbf{100}$  & $46.13$ & $112.19$ & $167.23$ \\
& $\mathbf{150}$ & $37.55$ & $\phantom{x}96.85$ & $145.07$ \\
\bottomrule
\end{tabular}
\end{table}

\section{Reduction of collective dynamics: dimension of agents}
\label{sec:red_dim}
This Section primarily aims to explore the reduction in the dimensionality of each agent, a goal that can be achieved through various methods \cite{Sirovich,benner2015survey,brunton2022data,hesthaven2022reduced,Bongini2015}. 
Among these, we will consider the application of the POD method that is described in the next Section.

\subsection{Dimension reduction by Proper Orthogonal Decomposition}
\label{sec:red_pod}
In this Section we apply the Proper Orthogonal Decomposition (POD) with a Galerkin projection, to construct a reduced ABM, with $N$ agents with states $x^r_i \in \mathbb{R}^r$, with $r \ll d$. We aim to reduce the dimension $d$ of each agent, by applying POD with a Galerkin projection to each full system \eqref{abm}. First of all, we need to construct a subspace onto which project the equations of \eqref{abm}. We look for a matrix $\Psi_r \in \mathbb{R}^{d \times r}$, which will allow us to preserve distances and, as a consequence, to obtain for the reduced ABM the same structure of \eqref{abm}. 
To obtain the matrix $\Psi_r$, we first construct a snapshot matrix $S$. We collect data from \eqref{abm}, an approximate solution $\{x_i(t_k)\}$, $k = 0,1,\ldots, n$ for some time instances $\{t_0,t_1, \ldots, t_n\}$. We build the snapshot matrix as follows
\begin{equation}
    \label{snapshots}
    S = \left[S_1, S_2, \ldots, S_N \right]
\end{equation}
with $S_i = \left[ x_i(t_0), \, x_i(t_1), \, \ldots, \, x_i(t_n) \right] \in \mathbb{R}^{d \times (n+1)}$, thus $S \in \mathbb{R}^{d \times N(n+1)}$. The matrix $\Psi_r$, i.e. the POD basis of rank $r$ we are looking for, is given by the left singular vectors of the Singular Value Decomposition (SVD) of $S$, $S \approx \Psi_r \Sigma_r W_r^T$, where $\Psi_r \in \mathbb{R}^{d \times r}, \Sigma_r \in \mathbb{R}^{r \times r}$ and $W_r \in \mathbb{R}^{N(n+1) \times r}$.
We apply POD with a Galerkin projection, by making the ansatz
\begin{equation}
    \label{ansatz_x}
    x_i \approx \Psi_r x_i^r
\end{equation}
\begin{equation}
    \label{ansatz_u}
    u_i \approx \Psi_r u_i^r
\end{equation}
for the state and control variable, where $x_i^r, u_i^r \in \mathbb{R}^r$ are the position (respectively control) in the reduced space of dimension $r$.
By plugging the assumptions \eqref{ansatz_x}-\eqref{ansatz_u} in the full agent-based model \eqref{abm}, we obtain
\begin{equation}
    \Psi_r \dot{x}_i^r = \frac{1}{N} \sum_{j=1}^{N} \phi(\|\Psi_r x_i^r -\Psi_r x_j^r\|) \left(\Psi_r x_j^r-\Psi_r x_i^r\right) + \Psi_r u_i^r.
\end{equation}
We multiply by $\Psi_r^T$ and employ the orthogonality, obtaining the reduced system,
\begin{equation*}
      \begin{aligned}
      \dot{x}_i^r &= \frac{1}{N} \Psi_r^T \sum_{j=1}^{N} \phi(\|\Psi_r x_i^r-\Psi_r x_j^r\|)\left(\Psi_r x_j^r- \Psi_r x_i^r\right) + u_i^r \\
      &= \frac{1}{N}  \sum_{j=1}^{N} \phi(\|\Psi_r x_i^r-\Psi_r x_j^r\|) \, \Psi_r^T \left(\Psi_r x_j^r- \Psi_r x_i^r\right) + u_i^r\\
      &= \frac{1}{N}  \sum_{j=1}^{N} \phi(\|\Psi_r x_i^r-\Psi_r x_j^r\|) \left( x_j^r- x_i^r\right) + u_i^r
      \end{aligned}
\end{equation*}
with unknowns $\x_i^r, \v_i^r \in \mathbb{R}^r$ and $r \ll d$. \\
We observe that the interaction kernel $\phi$ is evaluated at $\Psi_r x_i^r \in \mathbb{R}^d$, thus it still depends on the dimension of the full model. 
Since $\Psi_r$ preserves distances, we have that
\begin{equation*}
     \phi(\|\Psi_r x_i^r - \Psi_r x_j^r\|) = \phi(\|x_i^r-x_j^r\|)
\end{equation*}
where $\|x_i^r-x_j^r\|$ is the Euclidean distance of vectors in $\mathbb{R}^r$. 
% Hence, we define
% \begin{equation}
%     \label{red_comm_kernel}
%     \begin{aligned}
%     & \widetilde{P} : \mathbb{R}^r \rightarrow \mathbb{R}^r \\
%     & \widetilde{P}(x_i^r,x_j^r) = \phi(\|x_i^r-x_j^r\|)
%     \end{aligned}
% \end{equation}
% the communication kernel of the reduced system. 
Therefore, the reduced model
\begin{equation}
    \label{pod_abm}
    \dot{x}_i^r = \frac{1}{N} \sum_{j=1}^{N} \phi(\|x_i^r-x_j^r\|) \left(x_j^r-x_i^r\right) + u_i^r
\end{equation}
represents the dynamics of the ABM in the reduced space, while keeping the number of agents fixed. This allows us to obtain a reduced model \eqref{pod_abm} that has the same structure as \eqref{abm}. Our goal is to achieve consensus within the reduced system through control, ensuring that even when returning to the full space, the system tends to consensus. To this aim, we denote by $\widetilde{x}_i(t) \in \mathbb{R}^d$ the solution obtained by the reduced model and reconstructed by \eqref{ansatz_x}.
The following result holds.

% \begin{tr}
%      If the full system \eqref{abm} tends to consensus, then also the reduced system \eqref{pod_abm} does.
% \end{tr}
% \begin{proof}
% We suppose that system \eqref{abm} tends to consensus, thus equation \eqref{consensus} holds, that is
%      $$\lim_{t \to + \infty}\| x_i(t) - \bar{x}(t)\| = 0$$
%      with $\bar{x}(t) = \frac{1}{N}\sum_{i=1}^N x_i(t)$. By using the ansatz \eqref{ansatz_x}, we have that
% $$\bar{x} = \frac{1}{N}\sum_{i=1}^N  x_i \approx \frac{1}{N}\sum_{i=1}^N \Psi_r x_i^r = \Psi_r \frac{1}{N} \sum_{i=1}^N x_i^r = \Psi_r \bar{x}^r$$
% with $\bar{x}^r = \frac{1}{N} \sum_{i=1}^N x_i^r$
% and
% $$0 = \lim_{t \to + \infty}\| x_i(t) - \bar{x}\| \approx \lim_{t \to + \infty}\| \Psi_r x_i^r(t) - \Psi_r \bar{x}^r\| = \lim_{t \to + \infty}\| x_i^r(t) -  \bar{x}^r\|,$$
% where the last equality follows from the orthogonality of the columns of $\Psi_r$. Therefore, it follows the consensus of the reduced system. 
% \end{proof}

\begin{tr}
     If the reduced system \eqref{pod_abm} tends to consensus, then it holds 
     \begin{equation}
         \lim_{t \to + \infty}\| \widetilde{x}_i(t) - \bar{\widetilde{x}}(t)\| = 0
     \end{equation}
with 
\begin{equation}
    \bar{\widetilde{x}}(t) =\frac{1}{N} \sum_{i=1}^{N} \widetilde{x}_i(t).
\end{equation}
In other words, the reconstructed state variable also tends towards consensus.
\end{tr}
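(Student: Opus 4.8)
The plan is to reduce everything to the single fact that the reconstruction map $x_i^r \mapsto \widetilde{x}_i = \Psi_r x_i^r$ is linear and, since the columns of $\Psi_r$ are orthonormal, acts as an isometry from $\mathbb{R}^r$ onto its image in $\mathbb{R}^d$. This is the very same distance-preserving property of $\Psi_r$ already invoked to derive \eqref{pod_abm}.

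First I would make the hypothesis explicit. Introducing the mean of the reduced states,
\begin{equation*}
    \bar{x}^r(t) := \frac{1}{N}\sum_{i=1}^{N} x_i^r(t),
\end{equation*}
the assumption that the reduced system \eqref{pod_abm} tends to consensus means precisely that $\lim_{t\to+\infty}\|x_i^r(t)-\bar{x}^r(t)\| = 0$ for every $i=1,\dots,N$, where the norm is the Euclidean norm on $\mathbb{R}^r$. Next, using that $\widetilde{x}_i(t) = \Psi_r x_i^r(t)$ and that averaging commutes with the linear map $\Psi_r$, I would observe that $\bar{\widetilde{x}}(t) = \frac{1}{N}\sum_{i=1}^N \Psi_r x_i^r(t) = \Psi_r \bar{x}^r(t)$, and therefore
\begin{equation*}
    \widetilde{x}_i(t) - \bar{\widetilde{x}}(t) = \Psi_r\bigl(x_i^r(t) - \bar{x}^r(t)\bigr).
\end{equation*}

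Finally I would invoke $\Psi_r^{T}\Psi_r = I_r$ — valid because $\Psi_r$ collects $r$ left singular vectors of the SVD of $S$ and hence has orthonormal columns — which gives $\|\Psi_r v\| = \|v\|$ for every $v\in\mathbb{R}^r$. Applying this with $v = x_i^r(t)-\bar{x}^r(t)$ yields $\|\widetilde{x}_i(t)-\bar{\widetilde{x}}(t)\| = \|x_i^r(t)-\bar{x}^r(t)\|$, and letting $t\to+\infty$ together with the hypothesis concludes the proof. There is no genuine obstacle here; the only point worth stating carefully is the orthonormality $\Psi_r^{T}\Psi_r = I_r$ of the truncated left-singular-vector matrix, which is what lets the $\mathbb{R}^r$-consensus be transferred verbatim to $\mathbb{R}^d$.
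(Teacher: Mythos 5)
Your proposal is correct and follows essentially the same route as the paper's own proof: write $\widetilde{x}_i - \bar{\widetilde{x}} = \Psi_r(x_i^r - \bar{x}^r)$ by linearity of $\Psi_r$, then use the orthonormality of its columns to transfer the reduced-space consensus to the reconstructed states. Your explicit remark that $\Psi_r^{T}\Psi_r = I_r$ is the key isometry is exactly the "orthogonality of the columns of $\Psi_r$" step invoked in the paper.
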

\begin{proof}
We suppose that system \eqref{pod_abm} tends to consensus, that is
     $$\lim_{t \to + \infty}\| x_i^r(t) - \bar{x}^r(t)\| = 0$$
     with $\bar{x}^r(t) = \frac{1}{N}\sum_{i=1}^N x_i^r(t)$. 
     We have that
\begin{equation*}
\begin{aligned}
\lim_{t \to + \infty}\| \widetilde{x}_i(t) - \bar{\widetilde{x}}\| &=  \lim_{t \to + \infty}\left \| \widetilde{x}_i(t) - \frac{1}{N} \sum_{i=1}^{N} \widetilde{x}_i(t) \right \| = \lim_{t \to + \infty} \left \| \Psi_r x_i^r(t) - \frac{1}{N} \sum_{i=1}^{N} \Psi_r x_i^r(t) \right \| \\
&= 
\lim_{t \to + \infty}\left \| \Psi_r \left( x_i^r(t) - \frac{1}{N} \sum_{i=1}^{N} x_i^r(t) \right) \right \| = \lim_{t \to + \infty}\left \| x_i^r(t) - \frac{1}{N} \sum_{i=1}^{N} x_i^r(t) \right \| = 0,
\end{aligned}
\end{equation*}
where the last two equalities follow from the orthogonality of the columns of $\Psi_r$ and from the hypothesis of consensus for \eqref{pod_abm}.
\end{proof}

\subsection{Reduction of agents' dimension in the opinion-dynamics model}
\begin{figure}[t]
    \centering
    \includegraphics[width=0.33\linewidth]{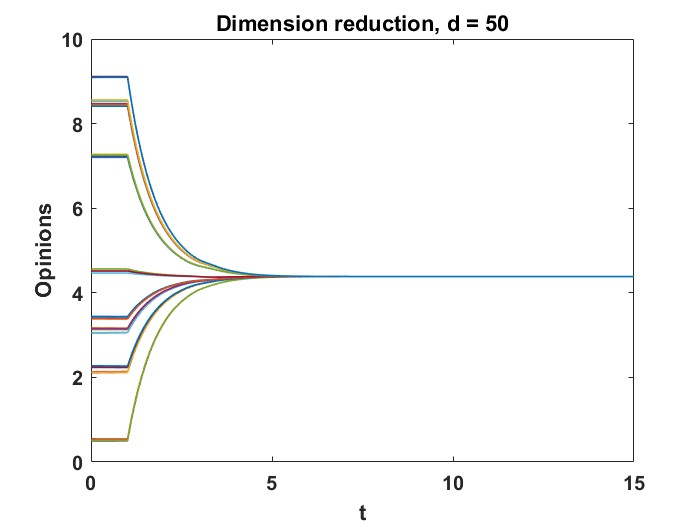}
    \includegraphics[width=0.33\linewidth]{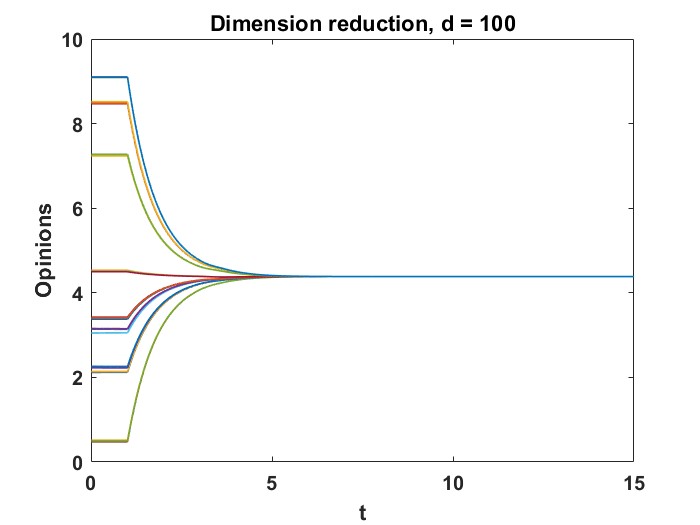}
    \includegraphics[width=0.33\linewidth]{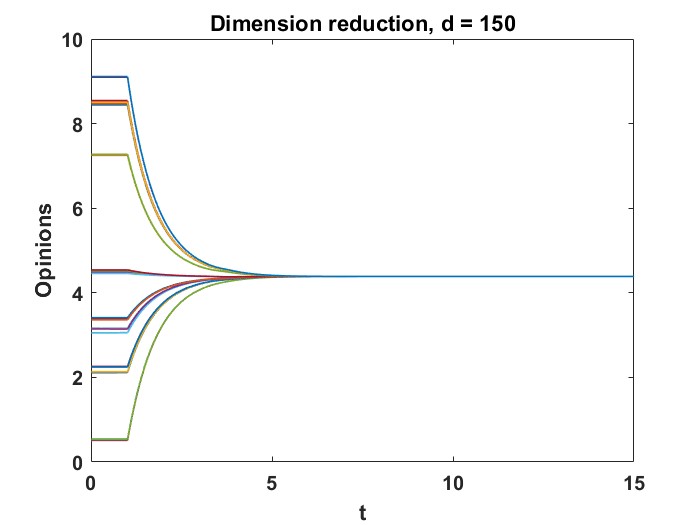}
    \includegraphics[width=0.33\linewidth]{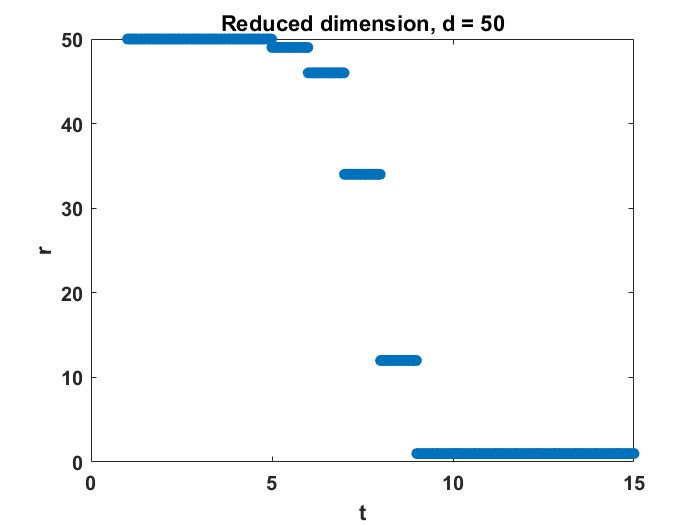}
    \includegraphics[width=0.33\linewidth]{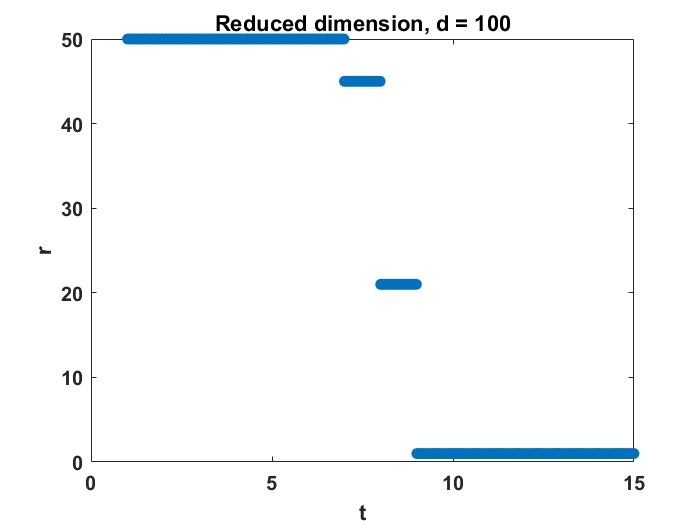}
    \includegraphics[width=0.33\linewidth]{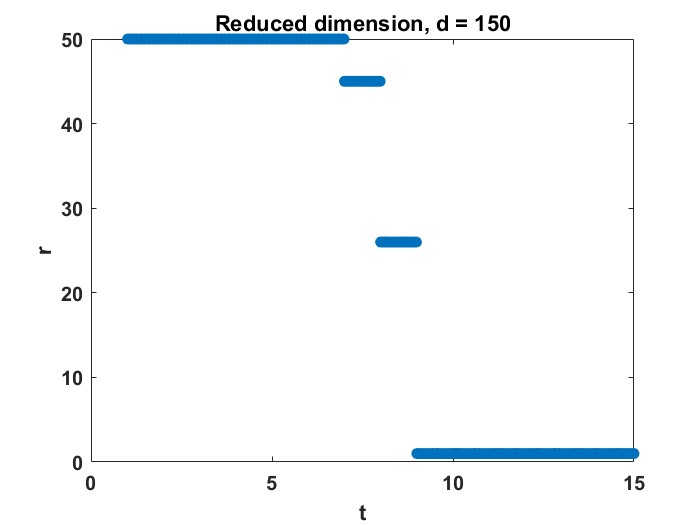}
    \caption{Effect of POD-based dimension reduction on consensus formation for fixed number of agents $N = 50$ and varying agent dimension $d \in \{50, 100, 150\}$. The top row shows the evolution of the mean opinion over time, confirming convergence to consensus. The bottom row reports the effective reduced dimension $r$ selected via singular value decomposition with a tolerance of $10^{-3}$. As the system approaches consensus, the rank of the snapshot matrix decreases, resulting in lower-dimensional representations.}
    \label{fig:test_pod}
\end{figure}

We now turn our attention to the reduction of the dimensionality of each agent in the opinion dynamics model. Unlike in the previous section, where the number of agents was reduced via clustering, here we maintain the full agent population and instead project the high-dimensional ABM onto a lower-dimensional subspace. This is achieved using the POD method with Galerkin projection, as described in Section \ref{sec:red_pod}.

We fix the number of agents to $N = 50$ and vary the state dimension $d \in \{50, 100, 150\}$, applying POD to obtain reduced models of dimension $r \ll d$. The reduced dynamics preserves the structure of the original ABM, while lowering the computational cost. The POD basis is constructed from snapshots of the full system, and the reduced system evolves in the subspace spanned by the most significant modes.

Figure \ref{fig:test_pod} shows the results of the simulations. The top row displays the evolution of the mean opinion over time, confirming that consensus is achieved even in the reconstructed agents. The bottom row reports the effective reduced dimension $r$ selected based on a tolerance threshold in the singular values. Interestingly, the value of $r$ tends to decrease as the system approaches consensus. This behavior is attributed to the redundancy in the snapshot data: as agents align their opinions, the variability across snapshots diminishes, leading to a lower-rank representation.

% To assess the computational efficiency of the POD-based reduction, we compare the CPU time required to reach consensus in the reduced model with that of the full system. Table \ref{tab:consensus_cpu_pod} summarizes the speed-up factors for different values of $d$. The results highlight the substantial gains in performance, especially for high-dimensional agents. This confirms the effectiveness of dimensionality reduction in enabling the application of optimal control strategies in high-dimensional settings, without compromising the qualitative behavior of the system.
Table \ref{tab:consensus_cpu_pod} presents the speed-up factors achieved by reducing the dimension of agents, comparing the CPU time required by the reduced and full-order models for various values of $d$ and $N$. The results show that dimensionality reduction yields modest computational savings overall and the benefits become more evident as $d$ increases. In particular, for $d = 50$, the speed-up factors remain close to $1$, indicating virtually no reduction in computational cost. For $d = 100$, the speed-up improves slightly, reaching values around $1.1$–$1.25$, while for $d = 150$, the speed-up reaches approximately $1.3$–$1.6$ depending on $N$. These results confirm that POD-based dimensionality reduction is most beneficial for systems with high-dimensional agents, where the relative reduction in computational effort becomes more substantial. However, the overall gains remain moderate compared to the impact of reducing the number of agents, as shown previously.

\begin{table}[tbp]
\centering
\caption{Effect of POD-based dimension reduction on the CPU time required to reach consensus for $\alpha = 1.6$. The table reports the speed-up factor obtained by reducing the agents' dimension.}
\label{tab:consensus_cpu_pod}
\begin{tabular}{ccccc}
\toprule
& & \multicolumn{3}{c}{\textbf{N}} \\
\cmidrule{3-5}
& & $\mathbf{50}$ & $\mathbf{100}$ & $\mathbf{150}$ \\
\midrule
\multirow{3}{*}{\textbf{d}} 
& $\mathbf{50}$ & $0.99$ & $1.04$ & $0.99$ \\
& $\mathbf{100}$  & $1.25$ & $1.09$ & $1.24$ \\
& $\mathbf{150}$ & $1.62$ & $1.35$ & $1.15$ \\
\bottomrule
\end{tabular}
\end{table}

\section{Two-level reduction framework}
\label{sec:framework}
In this Section, we introduce the framework designed to induce consensus through the application of optimal control to the clustered reduced ABM. Specifically, we outline the methodology employed to structure the clustering process, reduce the complexity of the system, and implement control strategies that guide the dynamics of the reduced model toward a consensus state.

The process begins at iteration $k = 1$ with an initial simulation of the full ABM where no control is applied, allowing the agents to evolve naturally according to their governing dynamics over the time interval $[t_0, t_n]$, discretized using a step size $h_t$ such that $t_n = t_0 + n h_t$. 
% we first set the control inputs to zero, meaning that $u_i = 0$ for all agents $i = 1, \dots, N$. 
% We then solve the full ABM over the time interval $[t_0, t_n]$, discretized using a step size $h_t$ such that $t_n = t_0 + n h_t$. This numerical integration provides us with the state vectors $x_i(t_n)$ at the final time $t_n$. Using these state vectors, we then identify clusters within the system by applying the DBSCAN algorithm.
Once the uncontrolled evolution has been simulated, clustering techniques are employed to group agents with similar dynamics. Specifically, the DBSCAN algorithm is used at time $t_{n+k-1}$ to classify agents into $K$ clusters according to their spatial distribution. 
After clustering, we derive the dynamics of their centers of mass, following the formulation given in Equation \eqref{dyn_com}. 
% The evolution of these cluster centers captures the collective behavior of the system in a simplified manner.

The next step involves the construction of the snapshot matrix $S$, as defined in equation \eqref{snapshots} in the time interval $[t_{k-1},t_{n+k-1}]$. 
This matrix is used for dimensionality reduction using the POD method, recalled in Section \ref{sec:red_pod}. 
By applying POD to the system \eqref{dyn_com}, we obtain 
\begin{equation}
    \label{com_reduced}
    \dot{\hat{x}}_l^r = \frac{1}{N} \sum_{m=1}^{K} N_m \phi(\|\hat{x}_l^r-\hat{x}_m^r\|) \left(  \hat{x}_m^r - \hat{x}_l^r\right)
\end{equation}
a reduced-order model that preserves the essential characteristics of the system \eqref{dyn_com} while significantly decreasing computational complexity.

We then formulate an optimal control problem
\begin{equation}
    \label{com_cost}
    \min_{\hat{u}^r} \int_{t_{n+k-1}}^{t_{\nu(n+k-1)}} \frac{1}{K} \sum_{j=1}^{K} \left(\|\hat{x}_j^r(t) - \bar{\hat{x}}^r(t)\|^2 + \gamma \|\, \hat{u}_j^r(t)\|^2\right) dt
\end{equation}
subject to the controlled system dynamics
\begin{equation}
    \label{com_control}
    \dot{\hat{x}}_i^r = \frac{1}{N} \sum_{j=1}^{K} N_j \, \phi(\|\hat{x}_i^r-\hat{x}_j^r\|) \left(  \hat{x}_j^r - \hat{x}_i^r \right) + \hat{u}_i^r
\end{equation}
The aim consists in finding the control inputs $\hat{u}_i^r \in \mathbb{R}^r$ by solving the optimal control problem \eqref{com_cost}, 
subject to the constraints in \eqref{com_control}, over the time interval $[t_{n+k-1}, t_{\nu(n+k-1)}]$. 
The resulting optimal controls are then assigned to the individual agents within each cluster, setting $u_i^r(t_{n+k}) = \hat{u}_l^r(t_{n+k})$ for every agent $i$ belonging to cluster $l$.

After establishing the reduced controls, we project them back into the full-dimensional space using the transformation given by equation \eqref{ansatz_u}, 
thereby obtaining the corresponding full-space controls $u_i(t_{n+k})$. With these control inputs, we then compute the next state of each agent, $x_i(t_{n+k})$, reflecting the influence of the newly applied control strategies.

This iterative process continues until the system reaches a consensus state, indicating that the control has successfully guided the agents toward a desired configuration.
We summarize these steps in Algorithm \ref{alg:reduction-control}, which provides a structured representation of the methodology described above. A visual synthesis of the framework is also provided in Figure \ref{fig:flowdiagram}.

\begin{algorithm}[ht]
\caption{Adaptive Control via two-level reduction: clustering and POD}
\label{alg:reduction-control}
\begin{algorithmic}[1]
\State \textbf{INPUT:} threshold $tol$, $h_t$
\State \textbf{OUTPUT:} $x_i(t)$, $K(t)$, $r(t)$, $u_i(t)$, $T$
\State Simulate the uncontrolled ABM \eqref{abm} in $[t_0, t_n]$ with step size $h_t$
\State Set $k = 1$
\While{$X > tol$}
    \State Cluster the agents $\{x_i(t_{n+k-1})\}_{i=1}^N$ using DBSCAN to obtain $K$ clusters
    \State Compute the center of mass dynamics for each cluster via \eqref{dyn_com}
    \State Construct the snapshot matrix $S$ as in \eqref{snapshots} in $[t_{k-1},t_{n+k-1}]$
    \State Apply POD obtain a reduced model of dimension $r$ \eqref{com_reduced}
    \State Solve the reduced optimal control problem \eqref{com_cost}--\eqref{com_control} on $[t_{n+k-1}, t_{\nu(n+k-1)}]$
    \For{each cluster $l = 1, \dots, K$}
        \For{each agent $i \in I_l$}
            \State Set reduced control $u_i^r(t_{n+k}) \gets \hat{u}_l^r(t_{n+k})$
            \State Lift control: $u_i(t_{n+k}) \gets$ projection via \eqref{ansatz_u}
        \EndFor
    \EndFor
    \State Compute $x_i(t_{n+k})$ using $u_i(t_{n+k})$
    \State Set $k = k+1$
    \State Go to Step 6
    % \State Update states $x_i(t_{n+1})$ using the controlled dynamics
    % \State Shift the time window and update $S$
\EndWhile
\end{algorithmic}
\end{algorithm}

\textit{Inputs.} The inputs of the Algorithm are the time step $h_t$ and the threshold $tol$ for the consensus parameter $X$ defined in \eqref{consensus_parameter}.

\textit{Initialization.} We simulate the uncontrolled ABM \eqref{abm} in the time interval $[t_0,t_n]$ with time step $h_t$, where $t_n = t_0 + n h_t$.

\textit{Outputs.} The Algorithm returns the opinion values $x_i$, the controls $u_i$, the number of clusters $K$ and the reduced dimension $r$ over time, until the final time $T$ when the system achieves consensus.

\begin{remark}[Inputs for the DBSCAN algorithm]
The DBSCAN algorithm requires two input parameters: the radius $\varepsilon$ that defines the neighborhood of a point, and the minimum number of points $\text{MinPts}$ needed to form a dense region. In our setting, we fix $\text{MinPts} = 1$.
We adopt an empirical strategy based on the global scale of the initial data. Specifically, we set $\varepsilon = \|\text{data}\| / N$, where $\|\text{data}\|$ denotes the Frobenius norm of the matrix containing the initial positions of all agents, and $N$ is the total number of agents. Geometrically, this corresponds to an average measure of the spread of the data in the opinion space. Since the Frobenius norm aggregates the squared distances of all agents from the origin, dividing by $N$ yields a normalized estimate of the typical inter-agent distance. This choice ensures that agents within reasonable proximity are grouped together, while agents from distinct opinion clusters remain separated. In practice, it provides a robust and scale-adaptive threshold for density-based clustering, especially when the initial data is pre-clustered. Moreover, it avoids the need for manual tuning or heuristic distance plots, making it suitable for automated simulations across varying configurations.
\end{remark}

\begin{remark}[Choice of the reduced dimension]
The reduced dimension $r$ in the POD method is selected by analyzing the singular values of the snapshot matrix $S$. Specifically, we fix a tolerance threshold $\tau$ (e.g. $\tau = 10^{-3}$) and retain only those singular values $\sigma_i$ such that $\sigma_i \geq \tau$. This procedure ensures that the POD basis captures the dominant modes of the system while discarding directions with negligible energy. As the system evolves toward consensus, the variability among agents decreases, leading to increased redundancy in the snapshot data. Consequently, the effective rank of $S$ tends to decrease over time, allowing for lower-dimensional representations without compromising the accuracy of the reduced model.
\end{remark}

\begin{figure}[htbp]
\centering
\resizebox{\textwidth}{!}{
\begin{tikzpicture}[
    box/.style={rectangle, draw=black, thick, minimum width=4cm, minimum height=2cm, align=center, rounded corners=5pt},
    smallbox/.style={rectangle, draw=black, thick, minimum width=3.5cm, minimum height=1.5cm, align=center, rounded corners=5pt},
    arrow/.style={->, >=stealth, thick},
    label/.style={font=\small}
]

% Title
% \node[font=\Large\bfseries] at (0,0) {Two-Level Reduction Approach for Optimal Control of ABMs};

% Full ABM
\node[box, fill=blue!10] (fullabm) at (-6,-2.5) {
    \textbf{Full ABM}\\[2pt]
    $N$ agents\\[2pt]
    $\dot{x}_i = \frac{1}{N}\sum_{j=1}^{N} \phi(\|x_i - x_j\|)(x_j - x_i)$\\[2pt]
    $x_i \in \mathbb{R}^d, \quad i = 1,\ldots,N$
};

% Clustered ABM
\node[box, fill=orange!10, below=2cm of fullabm] (clustered) {
    \textbf{Clustered ABM}\\[2pt]
    $K$ clusters $(K \ll N)$\\[2pt]
    $\dot{\hat{x}}_l = \frac{1}{N}\sum_{m=1}^{K} N_m\phi(\|\hat{x}_l - \hat{x}_m\|)(\hat{x}_m - \hat{x}_l)$\\[2pt]
    $\hat{x}_l \in \mathbb{R}^d, \quad l = 1,\ldots,K$
};

% Arrow 1: Clustering
% \draw[arrow, color=green!60!black, line width=2pt] (fullabm.south) -- node[above, font=\footnotesize\bfseries] {Level 1: Clustering} node[below, font=\footnotesize] {DBSCAN} ++(clustered.north);
\draw[arrow, color=green!60!black, line width=2pt] 
  (fullabm.south) -- 
  node[midway, left, font=\footnotesize\bfseries] {Level 1: Clustering} 
  node[midway, right, font=\footnotesize] {DBSCAN} 
  (clustered.north);

% Arrow 2: POD
\draw[arrow, color=purple!60!black, line width=2pt] (clustered.east) -- node[above, font=\footnotesize\bfseries] {Level 2: POD} node[below, font=\footnotesize] {Dimension Reduction} ++(3,0);

% Reduced ABM
\node[box, fill=yellow!10, right=3cm of clustered] (reduced) {
    \textbf{Reduced ABM}\\[2pt]
    $K$ clusters, $r$-dimensional $(r \ll d)$\\[2pt]
    $\dot{\hat{x}}^r_l = \frac{1}{N}\sum_{m=1}^{K} N_m\phi(\|\hat{x}^r_l - \hat{x}^r_m\|)(\hat{x}^r_m - \hat{x}^r_l)$\\[2pt]
    $\hat{x}^r_l \in \mathbb{R}^r, \quad l = 1,\ldots,K$
};

% POD basis construction
\node[smallbox, fill=gray!10, above=0.5cm of reduced, minimum width=4cm] (pod) {
    \textbf{POD Basis}\\[2pt]
    $S = [S_1, S_2, \ldots, S_N]$\\[2pt]
    $S \approx \Psi_r \Sigma_r W_r^T$\\[2pt]
    $x_i \approx \Psi_r x^r_i$
};

% Arrow from POD basis to Reduced ABM
\draw[arrow, color=gray, line width=1pt] (pod.south) -- (reduced.north);

% Optimal Control Problem
\node[box, fill=purple!10, below=2.5cm of reduced, minimum width=6cm, minimum height=3cm] (control) {
    \textbf{Optimal Control Problem}\\[4pt]
    $\min_{\hat{u}^r} \int_{t_{n+k-1}}^{t_{\nu(n+k-1)}} \frac{1}{K}\sum_{j=1}^{K} \left[\|\hat{x}^r_j(t) - \bar{\hat{x}}^r(t)\|^2 + \gamma\|\hat{u}^r_j(t)\|^2\right] dt$\\[6pt]
    subject to:\\[2pt]
    $\dot{\hat{x}}^r_i = \frac{1}{N}\sum_{j=1}^{K} N_j \phi(\|\hat{x}^r_i - \hat{x}^r_j\|)(\hat{x}^r_j - \hat{x}^r_i) + \hat{u}^r_i$\\[4pt]
    \textit{Pontryagin's Principle:} $\hat{u}^r_i = -\frac{N}{2\gamma}\hat{p}^r_i$
};

% Arrow from Reduced ABM to Control
\draw[arrow, color=purple!60!black, line width=2pt] (reduced.south) -- (control.north);

% Fixed-Horizon Iterative Strategy
% \node[box, fill=green!10, below=2.5cm of clustered, minimum width=5cm] (iterative) {
    % \textbf{Fixed-Horizon Iterative Strategy}\\[2pt]
%     Solve on $[t_m, t_m + h]$\\[2pt]
%     Apply $u(t_m)$ for step $h_t \ll h$\\[2pt]
%     Repeat until $X(t) < \varepsilon$
% };

% Control Reconstruction
\node[box, fill=green!10, below=2.5cm of clustered, minimum width=5cm] (reconstruction) {
    \textbf{Control Reconstruction}\\[2pt]
    For each agent $i \in I_l$: $u^r_i(t_{n+k}) \leftarrow \hat{u}^r_l(t_{n+k})$\\[2pt]
    Lift control: $u_i(t_{n+k}) = \Psi_r u^r_i(t_{n+k})$\\[2pt]
    Update: $x_i(t_{n+k})$ using controlled dynamics
};

% Arrow from Control to Reconstruction
\draw[arrow, color=red!60!black, line width=2pt] (control.south) |- ++(0,-1) -| node[pos=0.25, below, font=\footnotesize\bfseries] {Control Design} (reconstruction.south) -- (reconstruction);

% Arrow from Reconstruction to Iterative Strategy
% \draw[arrow, color=gray, line width=1.5pt] (reconstruction.east) -- (control.west);

% Feedback loop arrow
\draw[arrow, dashed, color=gray, line width=1.5pt] (reconstruction.north) -- node[right, font=\footnotesize] {Iterate} (clustered.south);

\end{tikzpicture}
}
\caption{Flow diagram of the two-level model reduction framework for optimal control of agent-based dynamics. Agent clustering enables structural simplification, while POD captures dominant behavioral modes. The reduced model is used for control design and subsequently mapped back to the full system to iteratively refine the strategy.}
\label{fig:flowdiagram}
\end{figure}
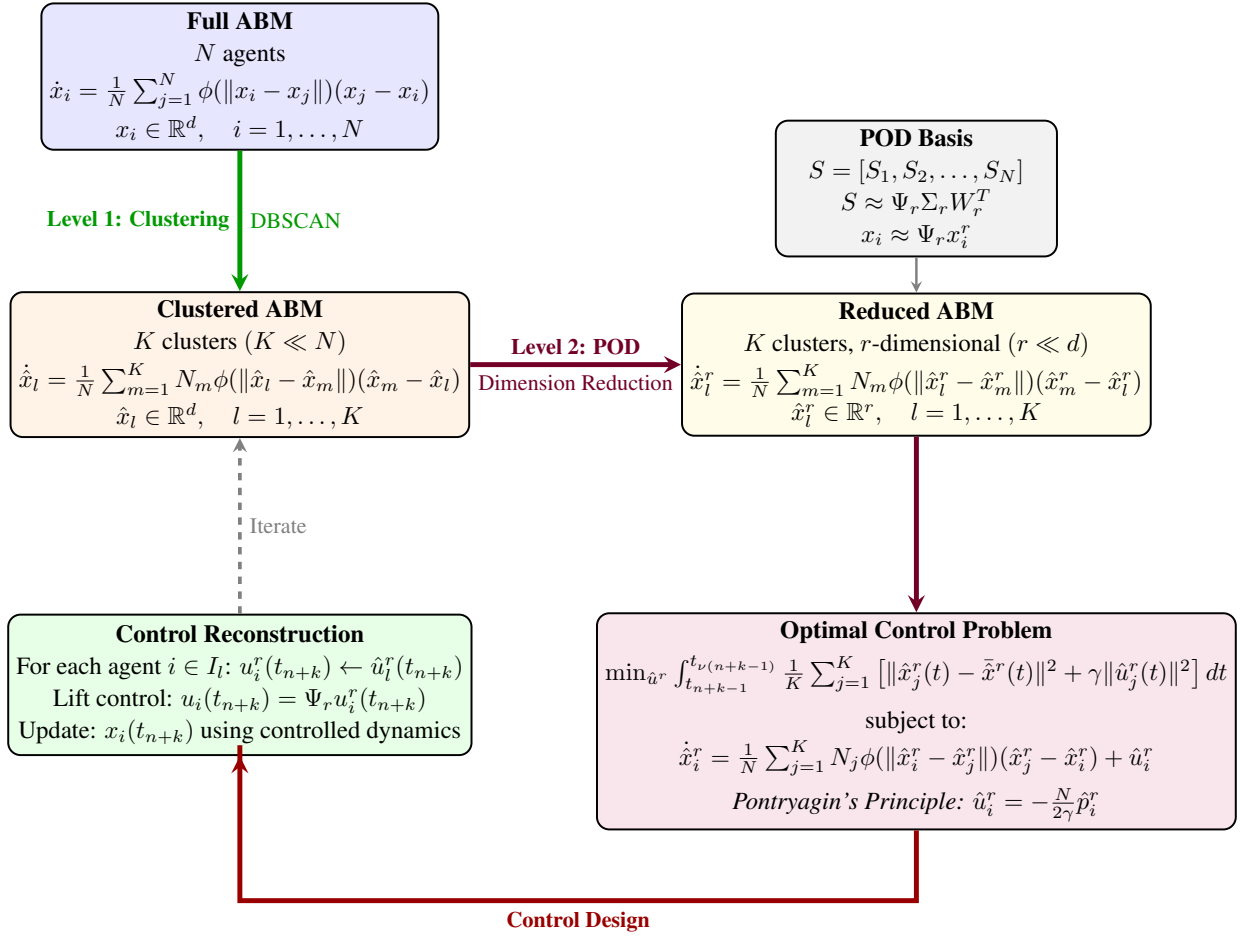

\subsection{Two-level reduction in the opinion-dynamics model}

In this Section, we assess the performance of the full reduction framework, which combines agent clustering and dimensionality reduction via POD. We consider a high-dimensional system with $N = 150$ agents, each evolving in an opinion space of dimension $d = 150$. The goal is to demonstrate that the proposed two-level reduction strategy enables efficient and accurate consensus control, even in settings where the full model would be computationally prohibitive.
The framework proceeds in two stages. First, we apply the DBSCAN algorithm to the initial agent configuration, identifying the number $K$ of clusters based on spatial density. Then we consider the dynamics of the centers of mass of these clusters, reducing the number of controlled entities from $N$ to $K$. Second, we perform POD on the trajectories of the cluster centers to reduce the state-space dimension. The reduced basis is constructed from a snapshot matrix, and the effective dimension $r$ is selected by discarding singular values below a fixed threshold $\tau = 10^{-3}$.

The top left panel of Figure \ref{fig:test_framework} shows the evolution of the mean opinion over time in the full order model, where control is applied throughout the time interval. 
% confirming that consensus ($X(t) \le 10^{-19}$) is achieved. 
In contrast, the top right panel displays the dynamics under the two-level reduction framework, which evolves uncontrolled in $[0,1]$ before switching to control. Although the controlled full order ABM reaches consensus earlier in time - just over $10$ seconds compared to approximately $30$ seconds in the framework scenario. 
The bottom left panel displays the number of clusters over time, which decreases as agents merge into unified opinion groups. The bottom right panel reports the reduced dimension $r$ selected at each time step. As the system approaches consensus, the variability in the snapshot data diminishes, resulting in a lower-rank representation.

Table \ref{tab:consensus_cpu_complete} reports the speed-up factors achieved when both agent reduction (reducing $N$) and dimensionality reduction (reducing $d$) are applied together. The speed-up is computed by comparing the runtime of the proposed two-level framework against that of the full control strategy applied to the same configuration $(d,N)$, without any form of reduction. This direct comparison allows us to quantify the computational advantage while preserving the structural consistency of the benchmark. 
The results clearly demonstrate that the two-level reduction strategy yields substantial computational gains. Speed-up factors exceed $100$ in many cases, and reach values above $200$ when $N=150$ and $d=50$.
As expected, the speed-up factor increases consistently with the number of agents $N$, confirming that the combined reduction strategy is particularly effective in large-scale scenarios. As observed previously, speed-up factors decrease slightly as $d$ increases, reflecting the cost associated with POD-based reduction. Nevertheless, even for $d=150$, the gains remain significant - for instance, the speed-up reaches approximately $135$ when $N=150$.

Despite a slightly longer convergence time—approximately $30$ seconds versus $10$ for the full-order system—the two-level reduction framework achieves better computational performance than the full-order approach. These results demonstrate the efficiency of combining clustering and dimensionality reduction techniques. The two-level reduction allows for substantial computational savings, making the simulation of high-dimensional, large-agent systems computationally feasible while preserving the system's essential dynamics.

 % The results confirm that the two-level reduction framework preserves the essential dynamics of the system while drastically reducing computational complexity. In particular, it enables the application of optimal control strategies in high-dimensional settings that would otherwise be infeasible.

\begin{figure}[htbp]
    \centering
    \includegraphics[width=0.4\linewidth]{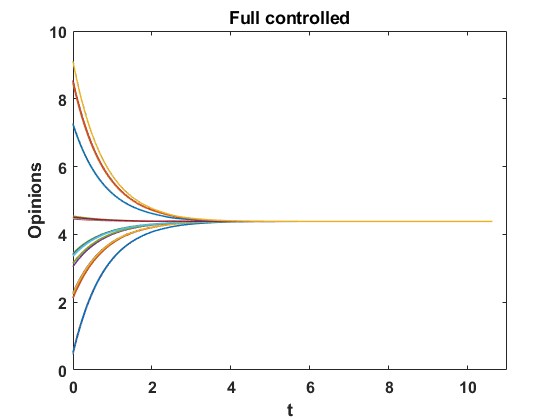}
    \includegraphics[width=0.4\linewidth]{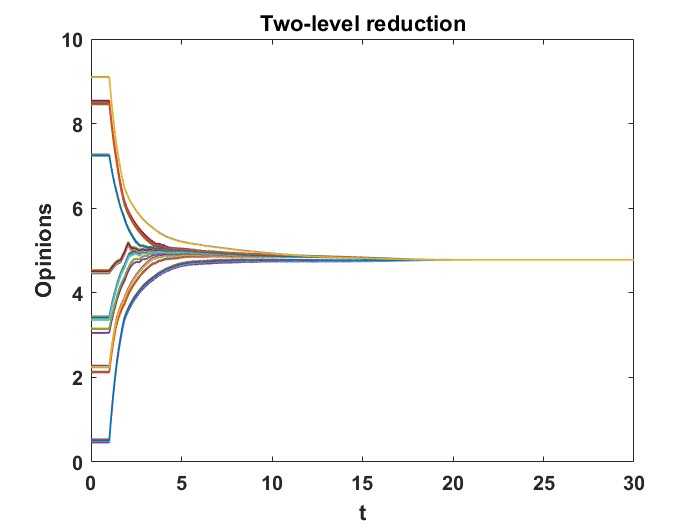}
    \includegraphics[width=0.4\linewidth]{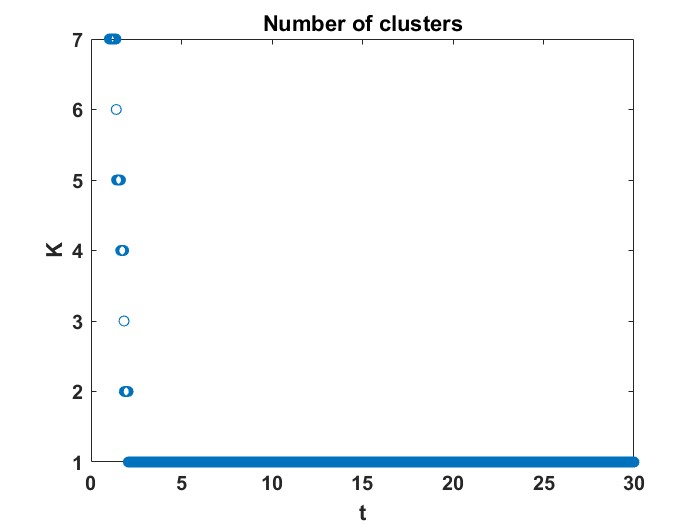}
    \includegraphics[width=0.4\linewidth]{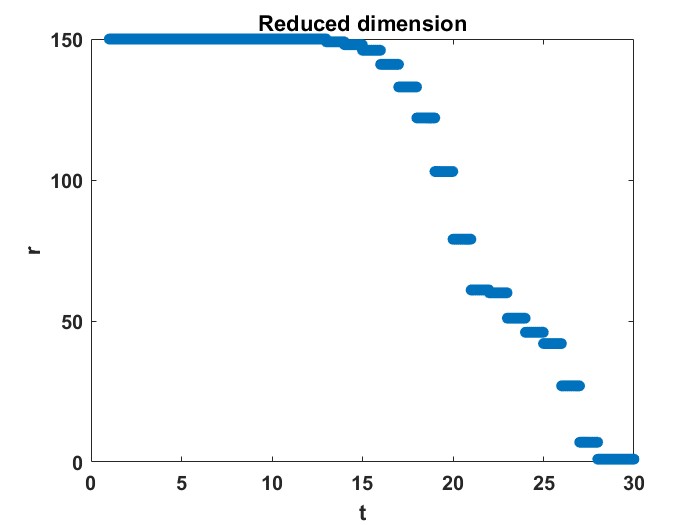}
    \caption{Two-level reduction results for $d = 150$ and $N = 150$. Top panel: evolution of the mean opinion over time, confirming convergence to consensus with a threshold of $10^{-19}$ for the full model without any reduction (left) and for the proposed reduction framework (right). Bottom panels: the left plot displays the number of clusters over time, which decreases as agents merge into unified opinion groups, whereas the right plot reports the reduced dimension $r$ selected via POD at each time step, based on a singular value threshold of $10^{-3}$. The results demonstrate the effectiveness of the combined clustering and dimensionality reduction framework in inducing consensus while significantly reducing computational complexity.}
    \label{fig:test_framework}
\end{figure}

\begin{table}[htbp]
\centering
\caption{Effect of two-level reduction on the CPU time required to reach consensus for $\alpha = 1.6$. The table reports the speed-up factor obtained by reducing both the number of agents and their dimension.}
\label{tab:consensus_cpu_complete}
\begin{tabular}{ccccc}
\toprule
& & \multicolumn{3}{c}{\textbf{N}} \\
\cmidrule{3-5}
& & $\mathbf{50}$ & $\mathbf{100}$ & $\mathbf{150}$ \\
\midrule
\multirow{3}{*}{\textbf{d}} 
& $\mathbf{50}$ & $70.06$ & $167.79$ & $236.34$ \\
& $\mathbf{100}$  & $61.11$ & $109.26$ & $160.44$ \\
& $\mathbf{150}$ & $54.60$ & $103.88$ & $134.99$ \\
\bottomrule
\end{tabular}
\end{table}

\section{Conclusions}
\label{sec:conclusion}
In this work, we presented a structured and modular framework for the reduction and control of large-scale agent-based models (ABMs), with the specific goal of steering the collective system dynamics toward consensus. The proposed methodology tackles two major sources of complexity inherent in ABMs: the large number of interacting agents and the high dimensionality of their individual state spaces. To address these challenges, we designed a two-stage reduction strategy that first reduces the number of agents through clustering and then compresses the state dimension using projection-based model order reduction techniques.

In the first stage, we apply clustering techniques to group agents exhibiting similar behaviour. This step produces a coarser system in which each cluster is represented by a center of mass, effectively capturing the dominant macroscopic dynamics. In the second stage, we perform a dimensionality reduction using Proper Orthogonal Decomposition, which identifies a low-dimensional subspace that best approximates the trajectory data of each cluster. This results in a set of reduced-order ODEs, one for each cluster, that retain the key structural properties of the original agent-based model.

These reduced dynamics serve as the basis for an optimal control problem formulated using Pontryagin’s Maximum Principle. The resulting control inputs are computed on the simplified model and subsequently lifted back to the full agent space through a suitable reconstruction procedure. This hybrid strategy enables the design of control policies that are both computationally efficient and dynamically meaningful.

The numerical experiments confirm that the direct application of optimal control strategies to full-scale ABMs becomes increasingly infeasible as the system scales in size and complexity. In such settings, control inputs either become too costly to compute or lose effectiveness due to the high-dimensional nature of the dynamics. Our reduction-and-control framework overcomes these obstacles by preserving the essential dynamical features necessary for consensus while drastically reducing the computational workload. As a result, it enables the deployment of feedback-based coordination mechanisms even in scenarios where direct methods would be prohibitive.

Beyond the first-order framework considered in this work, the proposed reduction-and-control methodology can be naturally extended to second-order agent-based models, such as the well-known Cucker–Smale dynamics \cite{choi2017emergent}. These models incorporate velocity as an additional state variable and are widely used to describe flocking and alignment behaviours in both natural and artificial systems. The second-order structure introduces further complexity due to the enlarged state space and the coupled nature of position and velocity dynamics. Nevertheless, the same principles of clustering, projection-based model order reduction, and optimal control design remain applicable. 
Moreover, this framework can also be adapted to agent-based models exhibiting more complex behaviors beyond simple alignment, such as intermittent behaviour \cite{Monti_onoff,Diele_onoff} and spatial pattern formation \cite{alla2023adaptive,ALLApdmd,BOZZINI2021}. 
%Therefore, extending our framework to second-order dynamics would not only be technically feasible, but also practically relevant for a broader class of applications.
These extensions demonstrate not only the technical feasibility of the framework, but also its practical relevance across a broader class of applications.

The proposed control and reduction framework can be extended to more complex agent-based models, including those arising in the study of opinion dynamics and collective risk perception. A particularly relevant example is provided by the O.R.E. model developed in \cite{giardini2021opinion}, which investigates how institutional communication and social influence jointly shape public perception in the face of uncertain, potentially catastrophic events such as natural disasters or pandemics.

In such contexts, agents process information received from both centralized (institutional) and decentralized (peer-to-peer) sources, with opinion formation influenced by individual attitudes such as trust and risk sensitivity. As discussed in that work, the dynamics often lead to persistent fragmentation or polarization - especially when institutional credibility is low or social amplification is strong. While the original model emphasizes the descriptive understanding of these dynamics, our framework offers a complementary perspective: it focuses on designing optimal interventions to enforce consensus as a purely dynamical coordination problem.

Although our control is agnostic to opinion content and does not aim to promote a specific belief, its use could be instrumental in analyzing whether and how timely, targeted interventions (e.g., public messaging or incentives) could synchronize opinion clusters or accelerate convergence in situations where consensus is desirable - such as building collective awareness of environmental risks. This makes our method potentially valuable not only as a computational benchmark but also as a basis for testing the structural controllability of real-world communication processes under risk and uncertainty.

\section*{Acknowledgements}

\noindent A.M. and F.D. research activity is funded by the National Recovery and Resilience Plan (NRRP), Mission 4 Component 2 Investment 1.4 - Call for tender No.
3138 of 16 December 2021, rectified by Decree n.3175 of 18 December 2021 of Italian Ministry of University and Research funded by the European Union – NextGenerationEU; Award Number: Project code CN 00000033, Concession Decree No. 1034 of 17 June 2022 adopted by the Italian Ministry of University and Research, CUP B83C22002930006, Project title \lq \lq National Biodiversity Future Centre''. A.M. acknowledges support from a scholarship (\emph{borsa per l’estero}) granted by Istituto Nazionale di Alta Matematica (INdAM) to carry out a research stay at Imperial College London. A.M. and F.D. are members of the INdAM research group GNCS. F.D. and A.M. would like to thank Mr. Cosimo Grippa for his valuable technical support.

\bibliographystyle{plain} 
\bibliography{biblio.bib}

\end{document}